\numberwithin{equation}{section}
\newtheorem{lemma}{Lemma}[section]
\newtheorem{teo}[lemma]{Theorem}
\newtheorem{proposition}[lemma]{Proposition}
\newtheorem{corollary}[lemma]{Corollary}
\theoremstyle{remark}
\newtheorem{remark}[lemma]{Remark}
\newcommand{\ve} {\varepsilon}
\newcommand{\R}{{\mathbb R}}
\newcommand{\be}{\begin{equation}}
\newcommand{\ben}{\begin{equation*}}
\newcommand{\ee}{\end{equation}}
\newcommand{\een}{\end{equation*}}
\newcommand{\BL}{\begin{lemma}}
\newcommand{\EL}{\end{lemma}}
\newcommand{\BT}{\begin{theorem}}
\newcommand{\ET}{\end{theorem}}
\newcommand{\BP}{\begin{proposition}}
\newcommand{\EP}{\end{proposition}}
\newcommand{\BC}{\begin{corollary}}
\newcommand{\EC}{\end{corollary}}
\def\bs{\begin{split}}
\def\es{\end{split}}
\DeclareMathOperator{\divergence}{div}
\DeclareMathOperator{\Real}{Re}
\DeclareMathOperator{\Ker}{Ker}
\DeclareMathOperator{\rango}{Rg}
\newcommand{\COMMENT}[1]{}
\newcommand{\eps}{\varepsilon}
\newcommand{\p}{\partial}
\newcommand{\pnu}[1]{\frac{\partial{#1}}{\partial\nu}}
\newcommand{\Ds}{(-\Delta)^s}
\newcommand{\loc}{\textnormal{loc}}
\newcommand{\norm}[2][]{\left\|{#2}\right\|_{#1}}
\newcommand{\set}[1]{\left\{#1\right\}}
\newcommand{\textin}{\text{ in }}
\newcommand{\textand}{\text{ and }}
\begin{document}

\title[Supercritical NLS]{Bound state solutions for the supercritical fractional Schr\"odinger equation}

\author[W. Ao]{Weiwei Ao}

\address{Weiwei Ao
\hfill\break\indent
Wuhan University
\hfill\break\indent
Department of Mathematics and Statistics, Wuhan, 430072, PR China}
\email{wwao@whu.edu.cn}

\author[H. Chan]{Hardy Chan}

\address{Hardy Chan
\hfill\break\indent
University of British Columbia,
\hfill\break\indent
Department of Mathematics, Vancouver, BC V6T1Z2, Canada}
\email{hardy@math.ubc.ca}

\author[M.d.M. Gonz\'alez]{Mar\'ia del Mar Gonz\'alez}

\address{Mar\'ia del Mar Gonz\'alez
\hfill\break\indent
Universidad Aut\'onoma de Madrid
\hfill\break\indent
Departamento de Matem\'aticas, Campus de Cantoblanco, 28049 Madrid, Spain}
\email{mariamar.gonzalezn@uam.es}

\author[J. Wei]{Juncheng Wei}
\address{Juncheng. Wei
\hfill\break\indent
University of British Columbia
\hfill\break\indent
 Department of Mathematics, Vancouver, BC V6T1Z2, Canada} \email{jcwei@math.ubc.ca}

\begin{abstract}
We prove the existence of positive solutions for the supercritical nonlinear fractional Schr\"odinger equation $(-\Delta)^s u+V(x)u-u^p=0 \mbox{ in } \R^n$, with $u(x)\to 0$ as $|x|\to +\infty$, where $p>\frac{n+2s}{n-2s}$ for $s\in (0,1), \ n>2s$.  We show that if $V(x)=o(|x|^{-2s})$ as $|x|\to +\infty$, then for $p>\frac{n+2s-1}{n-2s-1}$, this problem admits a continuum of solutions.  More generally, for $p>\frac{n+2s}{n-2s}$, conditions for solvability are also provided. This result is the extension of the work by Davila, Del Pino, Musso and Wei to the fractional case.  Our main contributions are: the existence of a smooth, radially symmetric, entire solution of $(-\Delta)^s w=w^p \mbox{ in }\R^n$, and the analysis of its properties. The difficulty here is the lack of phase-plane analysis for a nonlocal ODE; instead we use conformal geometry methods together with Schaaf's argument as in the paper by Ao, Chan, DelaTorre, Fontelos, Gonz\'alez and Wei on the singular fractional Yamabe problem.
\end{abstract}

\date{}\maketitle


\centerline{AMS subject classification:  35J61, 35R11, 53A30}

\section{Introduction}

Fix $s\in(0,1)$ and $n>2s$. We consider the following problem
\begin{equation}\label{problem-1}
\begin{cases}
(-\Delta)^s u+Vu-u^p=0 &\mbox{ in }\R^n,\\
\lim\limits_{|x|\to \infty}u(x)=0,\quad u>0,
\end{cases}
\end{equation}
where $V$ is a non-negative potential, for a supercritical power  nonlinearity, i.e., $p>\frac{n+2s}{n-2s}$.

Problem \eqref{problem-1} arises when considering standing wave solutions
for the nonlinear fractional Schr\"odinger equation
\begin{equation}\label{NLS}
-i\frac{\partial \psi}{\partial t}=(-\Delta)^s \psi -Q(y)\psi+|\psi|^{p-1}\psi,\end{equation}
that is, solutions of the form $\psi(t,y)=\exp{(i\lambda t)}u(y).$
If $u(y)$ is positive and vanishes at infinity, then $\psi$ satisfies \eqref{NLS} if and only if $u$ solves  \eqref{problem-1}.

The fractional Schr\"odinger equation, a fundamental tool in fractional quantum mechanics, was introduced by Laskin \cite{Laskin1,Laskin2,Laskin3} (see also the appendix in \cite{Davila-DelPino-Dipierro-Valdinoci}) as a result of extending the Feynman path integral from the Brownian-like paths, which yields the standard Sch\"rodinger equation, to L\'evy-like quantum mechanical paths.

There is by now a huge literature on the fractional
 Schr\"odinger equation, see for instance, \cite{Felmer-Quaas-Tan,Secchi,Cheng,Davila-DelPino-Dipierro-Valdinoci,Molica-Radulescu} for a subcritical power, while for the critical case we have \cite{Frank-Lenzmann,Ambrosio-Figueiredo,Guo-He}, but this list is by no means complete. However, none of these deals with the supercritical regime. The present paper is one of the first attempts in this direction.

Our main results are the following:

\begin{teo}\label{thm1}
Assume that $V\geq 0, V\in L^\infty(\R^n)$ and $\displaystyle\lim_{|x|\to \infty}|x|^{2s}V(x)=0$. Then, for $$p>\frac{n+2s-1}{n-2s-1},$$
 problem \eqref{problem-1} has a continuum of solutions $u_\lambda$ such that $\displaystyle\lim_{\lambda\to 0}u_\lambda=0$ uniformly in $\R^n$.
\end{teo}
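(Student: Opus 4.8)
The strategy follows the Dávila–Del Pino–Musso–Wei approach, adapted to the nonlocal setting. The plan is to construct the solutions as perturbations of a rescaled "fast-decaying" entire solution of the homogeneous equation $(-\Delta)^s w = w^p$ in $\mathbb{R}^n$. First I would invoke the existence (announced in the abstract as a main contribution of the paper) of a smooth, positive, radially symmetric entire solution $w$ of $(-\Delta)^s w = w^p$ in $\mathbb{R}^n$, together with its asymptotics: for $p > \frac{n+2s-1}{n-2s-1}$, there should be a solution behaving like $w(x) \sim |x|^{-\frac{2s}{p-1}}$ at infinity (this is the range of $p$ for which the slow-decay exponent $\frac{2s}{p-1}$ is less than $n-2s$, so that $w$ is genuinely in the right functional class and the scaling works). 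Then for $\lambda > 0$ small I set $w_\lambda(x) = \lambda^{\frac{2s}{p-1}} w(\lambda x)$ — wait, more precisely one uses the scaling that makes $w_\lambda$ small: since $w$ itself does not decay fast enough to be an $L^2$ or bounded-energy object in general, the correct ansatz is $u_\lambda = w_\mu + \phi$ where $\mu = \mu(\lambda)$ is chosen so that the error term $V w_\mu$ is controlled, exploiting that $V(x) = o(|x|^{-2s})$ exactly compensates the homogeneity of $w$.

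The key steps, in order, are: (1) fix the entire solution $w$ and record the decay estimate $w(x) \le C(1+|x|)^{-\frac{2s}{p-1}}$, together with the linearized operator $L_0 \phi = (-\Delta)^s \phi - p w^{p-1}\phi$ and its mapping properties — in particular, a description of its bounded kernel (dilations and, if relevant, translations) and an invertibility statement in suitable weighted Hölder or $L^\infty$ spaces modulo that kernel; (2) define the approximate solution $W_\lambda(x) = \lambda^{\frac{2s}{p-1}} w(\lambda x)$ and compute the error $E_\lambda = (-\Delta)^s W_\lambda + V W_\lambda - W_\lambda^p = V W_\lambda$, then estimate $\|E_\lambda\|$ in the weighted norm using $\lim_{|x|\to\infty}|x|^{2s}V(x) = 0$ and the homogeneity of $W_\lambda$, showing it is $o(1)$ as $\lambda \to 0$; (3) set up the fixed-point / Lyapunov–Schmidt scheme: write $u_\lambda = W_\lambda + \phi$, project the equation onto the kernel and its complement, solve the "auxiliary" infinite-dimensional equation for $\phi = \phi(\lambda, \text{parameters})$ by the contraction mapping principle using the linear theory from step (1) and the smallness from step (2); (4) solve the finite-dimensional "reduced" equation for the parameters (dilation factor, center) — here the continuum of solutions $u_\lambda$ arises precisely because one parameter (the scaling $\lambda$ itself) can be left free, so the reduced equation is solvable for a one-parameter family; (5) verify $u_\lambda > 0$ by the smallness of $\phi$ relative to $W_\lambda$, and $u_\lambda \to 0$ uniformly from $\|W_\lambda\|_\infty = \lambda^{\frac{2s}{p-1}} \|w\|_\infty \to 0$.

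The main obstacle I expect is step (1): establishing the sharp linear theory for $L_0 = (-\Delta)^s - p w^{p-1}$ on $\mathbb{R}^n$ in weighted spaces — specifically, identifying the exact bounded kernel of $L_0$ and proving an a priori estimate $\|\phi\| \le C\|L_0\phi\|$ modulo that kernel. Because $w$ has only polynomial decay $|x|^{-\frac{2s}{p-1}}$, the potential $w^{p-1} \sim |x|^{-2s}$ is critical for the fractional Laplacian (it is exactly Hardy-scaling), so the usual Fredholm/blow-up arguments are delicate: one must rule out the appearance of slowly-decaying elements of the kernel and handle the nonlocal tails carefully. This is where the conformal-geometry viewpoint and Schaaf-type argument referenced in the abstract presumably enter — the substitution to a cylinder (via the Emden–Fowler / conformal change) turns the critical-weight problem into one with a genuine spectral gap, making the linear analysis tractable; nondegeneracy of $w$ among radial functions is the crucial input. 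The remaining steps (2)–(5) are then relatively standard perturbation theory, though the fractional Laplacian requires working with weighted Hölder spaces and the extension characterization to control $(-\Delta)^s$ of compactly-perturbed profiles.
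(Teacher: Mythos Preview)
Your overall strategy is correct and matches the paper's, but you have one key point backwards, and it is precisely the point that distinguishes Theorem~1.1 from Theorem~1.2. For $p>\frac{n+2s-1}{n-2s-1}$ the paper shows that \emph{no} Lyapunov--Schmidt reduction is needed: the linearized operator $L_0=(-\Delta)^s-pw^{p-1}$ (more precisely, the perturbed operator $(-\Delta)^s+V_\lambda-pw^{p-1}$) is directly invertible in the chosen weighted $L^\infty$ spaces, and the proof is a bare contraction mapping. Your step~(3) therefore collapses to an unprojected fixed-point argument, and your step~(4) is vacuous --- the continuum comes for free from leaving the scaling parameter $\lambda$ free, with no reduced equation to solve.

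The reason is exactly the content of the threshold, which your parenthetical misidentifies. The comparison ``$\frac{2s}{p-1}<n-2s$'' holds for every supercritical $p$ and is not what singles out $\frac{n+2s-1}{n-2s-1}$. What matters is the decay of the translation kernels $Z_i=\partial_i w\sim |x|^{-\frac{2s}{p-1}-1}$: a weighted-duality computation (Proposition~4.4 in the paper) shows that $Z_i$ lies in the dual space $L^2_{\delta,\vartheta}$ --- and hence obstructs surjectivity of $L_0$ --- if and only if $p<\frac{n+2s-1}{n-2s-1}$. Above this threshold the cokernel is trivial and one inverts directly; below it one must project onto $\mathrm{span}\{Z_i\}$ and then adjust a center point $\xi$ to kill the Lagrange multipliers, which is exactly the content of Theorem~1.2 and Section~7. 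So the reduction you sketch is the proof of the \emph{other} theorem; for Theorem~1.1 the argument is strictly simpler than what you propose.
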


\begin{teo}\label{thm2}
Assume $V\geq 0, V\in L^\infty(\R^n)$ and
$$\frac{n+2s}{n-2s}<p\leq \frac{n+2s-1}{n-2s-1}.$$
 Then the result of the previous theorem holds if either:
\begin{itemize}
\item[(a)] there exist $C>0$ and $\mu>n$ such that $V(x)\leq C|x|^{-\mu}$; or

\item[(b)] there exist a bounded non-negative function $f:\mathbb S^{n-1}\to \R$ not identically zero and $n-\frac{4s}{p-1}<\mu\leq n$ such that
\begin{equation*}
\lim_{|x|\to \infty}\Big(|x|^\mu V-f\Big(\frac{x}{|x|}\Big)\Big)=0.
\end{equation*}
\end{itemize}
\end{teo}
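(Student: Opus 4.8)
The plan is to realize the solution as a small perturbation of a rescaled copy of the entire solution of the potential‑free equation, so that the argument is essentially the one used for Theorem~\ref{thm1}, with hypotheses (a)--(b) taking over the role of the condition $\lim_{|x|\to\infty}|x|^{2s}V(x)=0$ in the error estimate. Let $w$ be the smooth, positive, radially symmetric entire solution of $(-\Delta)^s w=w^p$ in $\R^n$ with $w(0)=1$, which behaves like $c_\infty|x|^{-\frac{2s}{p-1}}$ as $|x|\to\infty$ (with lower‑order oscillatory corrections around the singular solution $w_s=c_\infty|x|^{-\frac{2s}{p-1}}$ when $p$ is below the fractional Joseph--Lundgren exponent), constructed through the conformal change of variables and Schaaf's argument. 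By scale invariance, $w_\lambda(x):=\lambda^{\frac{2s}{p-1}}w(\lambda x)$ still solves $(-\Delta)^s w_\lambda=w_\lambda^p$ exactly, and $\|w_\lambda\|_{L^\infty}=\lambda^{\frac{2s}{p-1}}\|w\|_{L^\infty}\to0$ as $\lambda\to0$. Writing $u=w_\lambda+\phi$, problem \eqref{problem-1} becomes
\begin{equation*}
(-\Delta)^s\phi+V\phi-p\,w_\lambda^{p-1}\phi=-Vw_\lambda+\big[(w_\lambda+\phi)^p-w_\lambda^p-p\,w_\lambda^{p-1}\phi\big],
\end{equation*}
that is, a fixed‑point equation $\phi=\mathcal L_\lambda^{-1}\!\big(E_\lambda+N_\lambda(\phi)\big)$ with error $E_\lambda=-Vw_\lambda$, superlinear remainder $N_\lambda$, and $\mathcal L_\lambda=(-\Delta)^s+V-p\,w_\lambda^{p-1}$ (keeping the good sign potential $V\ge0$ inside the linear operator).

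The crucial step is the linear theory: one must exhibit a right inverse of $\mathcal L_\lambda$ that is bounded uniformly in $\lambda\in(0,\lambda_0)$ between spaces weighted by the profile $w_\lambda(x)\sim\lambda^{\frac{2s}{p-1}}(1+\lambda|x|)^{-\frac{2s}{p-1}}$. After rescaling this reduces to a (weighted $L^\infty$ or $L^q$) estimate for $(-\Delta)^s+\lambda^{-2s}V(\cdot/\lambda)-p\,w^{p-1}$, for which I would pass to the Caffarelli--Silvestre extension, use the non‑degeneracy of $w$ (kernel generated by the translations $\partial_i w$ and the dilation field $\tfrac{2s}{p-1}w+x\cdot\nabla w$, all decaying faster than $w_\lambda$), and construct sub/supersolutions from the indicial roots of the Hardy‑type limit operator: near infinity $p\,w^{p-1}\sim p\,c_\infty^{p-1}|x|^{-2s}$ with $c_\infty^{p-1}=\Phi_{n,s}\!\big(\tfrac{2s}{p-1}\big)$, where $(-\Delta)^s(|x|^{-\gamma})=\Phi_{n,s}(\gamma)|x|^{-\gamma-2s}$, so the indicial exponents solve $\Phi_{n,s}(\gamma)=p\,\Phi_{n,s}\!\big(\tfrac{2s}{p-1}\big)$, and one checks $\tfrac{2s}{p-1}<\tfrac{n-2s}{2}$ for supercritical $p$, which leaves room for a correction decaying strictly faster than $w_\lambda$. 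The oscillation of $w$ when $p<p_{JL}$ is absorbed at the level of these barriers exactly as in the treatment of the singular fractional Yamabe problem, and the maximum principle for $(-\Delta)^s$ gives positivity of the associated Green's function; a finite‑dimensional Lyapunov--Schmidt reduction in the translation modes is included if the correction fails to decay faster than $\partial_i w$.

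It then remains to estimate $E_\lambda=-Vw_\lambda$ in the target norm and run the contraction mapping, and this is exactly where (a)--(b) enter, because for $p\le\frac{n+2s-1}{n-2s-1}$ (the critical exponent of dimension $n-1$) the bound $V=o(|x|^{-2s})$ no longer renders $\|E_\lambda\|$ small. Under (a), $V\le C|x|^{-\mu}$ with $\mu>n$ forces $V\in L^1(\R^n)$, so $\|Vw_\lambda\|_{L^1}\le\lambda^{\frac{2s}{p-1}}\|V\|_{L^1}\to0$, and combined with the pointwise bound near the origin this yields $\mathcal L_\lambda^{-1}E_\lambda\to0$ in the contraction norm; the nonlinear estimates are then routine. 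Under (b) the sharp threshold is $\mu>n-\tfrac{4s}{p-1}$, which is precisely the condition $\int V\,w_\lambda^2<\infty$ at infinity; since here $V$ carries the genuine homogeneous leading part $|x|^{-\mu}f(x/|x|)$, I would first peel off a second explicit correction $\psi_\lambda$ solving the linearized problem against $|x|^{-\mu}f(x/|x|)\,w_\lambda$ — a $(-\Delta)^s$ Poisson problem with homogeneous right‑hand side, solvable by decomposition into spherical harmonics provided $n-\tfrac{4s}{p-1}<\mu\le n$ keeps the indicial inequalities strict — and only then close the fixed point for the lower‑order remainder. Since $\phi$ (and $\psi_\lambda$) decay strictly faster than $w_\lambda$ and are small, $u=w_\lambda+\psi_\lambda+\phi>0$, $u(x)\to0$ as $|x|\to\infty$, and $u\to0$ uniformly as $\lambda\to0$, producing the desired continuum.

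I expect the main obstacle to be the linear theory for the nonlocal operator $\mathcal L_\lambda$: in the absence of phase‑plane/ODE methods the asymptotics of solutions of the linearized equation cannot be read off directly, so the uniform invertibility must be extracted from the extension problem together with carefully designed barriers encoding the indicial roots of the Hardy‑type limit operator and the oscillation of $w$. The secondary difficulty is case (b): the borderline decay $\mu\downarrow n-\tfrac{4s}{p-1}$ makes the error estimate sharp, which is what forces the explicit auxiliary ansatz $\psi_\lambda$ rather than a plain perturbation of $w_\lambda$.
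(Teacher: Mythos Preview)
Your proposal misidentifies the reason the threshold $p=\frac{n+2s-1}{n-2s-1}$ appears and, as a result, misuses hypotheses (a)--(b).

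Both (a) and (b) already imply $V(x)=o(|x|^{-2s})$ (in (b), $\mu>n-\tfrac{4s}{p-1}>2s$ since $p>\tfrac{n+2s}{n-2s}$). Hence the error $E_\lambda=-V_\lambda w$ is small in the $\|\cdot\|_{**}$ norm by exactly the same computation as in the proof of Theorem~\ref{thm1}; your sentence ``for $p\le\frac{n+2s-1}{n-2s-1}$ the bound $V=o(|x|^{-2s})$ no longer renders $\|E_\lambda\|$ small'' is not correct. What changes at $p=\frac{n+2s-1}{n-2s-1}$ is the \emph{linear theory}: the translation kernels $Z_i=\partial_i w\sim r^{-\frac{2s}{p-1}-1}$ fall into the dual weighted space precisely when $p<\frac{n+2s-1}{n-2s-1}$, so $L\phi=h$ is only solvable under the $n$ orthogonality conditions $\int h\,\partial_i w=0$ (Proposition~\ref{invertibility}, case~\emph{ii}). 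You mention a Lyapunov--Schmidt step only as a contingency, but here it is unavoidable and drives the whole argument.

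The paper's mechanism is therefore different from yours: one introduces a free translation parameter $\xi$, centers the ansatz at $\xi$, and first solves the projected problem
\[
(-\Delta)^s\phi+V_\lambda\phi-pw^{p-1}\phi=N(\phi)-V_\lambda w+\sum_{i=1}^n c_i Z_i,
\]
obtaining $\phi=\phi(\lambda,\xi)$ and Lagrange multipliers $c_i(\lambda,\xi)$. Hypotheses (a)--(b) are \emph{not} used to make the error small but to determine the leading term of the reduced map $\xi\mapsto(c_1,\dots,c_n)$, equivalently of $\int(N(\phi)-V_\lambda\phi-V_\lambda w)\,\partial_j w$, and to show it has a zero in $\xi$ by a degree argument. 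In case (a) the dominant contribution is $\lambda^{-2s}\int V(\tfrac{x-\xi}{\lambda})\,w\,\partial_j w$, of order $\lambda^{n-2s}$, and its leading vector field points like $\nabla^2 w(0)\xi$ near the origin; in case (b) the dominant contribution is $\lambda^{\mu-2s}\int |x|^{-\mu}f(\tfrac{x}{|x|})\,w(x+\xi)\,\partial_j w(x+\xi)$ and one finds a zero for large $|\xi|$ (or via the $\log\frac1\lambda$ term when $\mu=n$). Your treatment of (a) as an $L^1$ smallness estimate, and of (b) as peeling off an explicit homogeneous correction $\psi_\lambda$, does not touch these $n$ obstructions and therefore cannot close: after your fixed point you would still be left with nonzero $c_i$ and no parameter to kill them.
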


Our approach follows closely the local $s=1$ case from \cite{ddmw}. As in their work, we find a new phenomenon for equation \eqref{problem-1} that is different from the subcritical case, the one of \emph{dispersion}.

The main idea in the proof is to perturb a carefully chosen approximate solution.
More precisely, the building blocks we use are smooth entire radial solutions of the equation
\begin{equation}\label{entiresolution}
(-\Delta)^s w=w^p \mbox{ in }\R^n,\quad w>0,
\end{equation}
which satisfy
\begin{equation*}
w(0)=1, \quad \lim_{|x|\to \infty}w(x)|x|^{\frac{2s}{p-1}}=\beta^{\frac{1}{p-1}}.
\end{equation*}
Here $\beta$ is  a positive constant chosen so that $w_1(r)=\beta^{\frac{1}{p-1}}|x|^{-\frac{2s}{p-1}}$ is a solution to \eqref{entiresolution}. Its precise value is given in \cite{acdfgw} (there the constant is denoted by $A_{n,p,s}$).

Note that $w_1$ is singular at the origin. One of our main contributions here is the construction of a smooth, radially symmetric, entire solution $w$. While in the local case one readily obtains  existence by simple ODE phase-plane analysis, in the nonlocal regime we need to use an argument due to Schaaf and a bifurcation argument. Then we prove the asymptotic behavior of $w$ using conformal geometry methods. These ideas were introduced in \cite{acdfgw} to handle a nonlocal ODE. There the authors deal with the singular fractional Yamabe problem and, more generally, with problem \eqref{entiresolution} when the exponent $p$ is subcritical; we were able to extend their methods to the supercritical case.

In the case particular $s=\frac{1}{2}$, Chipot, Chlebik and Fila  \cite{Chipot-Chlebik-Fila} proved the existence of a radial entire solution and Harada \cite{Harada} established the desired asymptotic behavior at infinity. While our  manuscript was being prepared we heard of the recent work by Chen, Gui and Hu \cite{cgh}, that have extended the existence result to systems which includes the existence of solutions to the single equation for any $s\in(0,1)$ using Rabinowitz's bifurcation theory. In addition, they prove an upper bound for the solution, i.e.
\begin{equation*}
w(x)\leq C|x|^{-\frac{2s}{p-1}} \mbox{ for \ all }x\in \R^n.
\end{equation*}

However, here we need to show the precise asymptotic behavior of the solution $w$ near infinity, this is,
\begin{equation}\label{asymptotics-introduction}
w=w(|x|)=\beta^{\frac{1}{p-1}}|x|^{-\frac{2s}{p-1}}(1+o(1)) \mbox{ as }|x|\to \infty,
\end{equation}
and this is our first difficulty. In the classical case, this is reduced to analyze the corresponding ODE for $v(t)$, where $v(t)=r^{\frac{2}{p-1}}w(r)$, and we have denoted $r=|x|$, $t=-\log r$. As we have mentioned above, in the fractional case $v$ is a solution to a non-local ODE. We use the Hamiltonian approach developed in the previous work \cite{acdfgw} in order to prove \eqref{asymptotics-introduction}.

Our second main difficulty is the study the linearized operator of \eqref{entiresolution} around the entire solution $w$. Here again we use the arguments developed in \cite{acdfgw}, in particular, the study of the asymptotic behavior of solutions to a non-local linear ODE in terms of the indicial roots of the problem, and the injectivity properties of this linearized operator.

Finally, to pass from the linearized equation to the nonlinear problem \eqref{problem-1}  we follows the ideas from the local case in \cite{ddmw}.

\medskip

The paper is organized as follows. In Sections 2-3, we obtain the existence of entire radial solutions to \eqref{entiresolution} and analyze its asymptotic behavior. In Section 4, we consider the linearized operator around the solution $w$ obtained in Section 2. In Section 5, we use the results in Section 4 to study a perturbed linear problem.  Sections 6-7 are devoted to the proof of Theorems \ref{thm1}-\ref{thm2}.

\section{Existence of radial entire solutions}\label{section:entire}

Let $p+1>2^*(s):=\frac{2n}{n-2s}$. We look first for radially symmetric solutions for
\begin{equation}\label{eq:LE}
\Ds{w}=w^p\quad\textin\R^n,\quad w>0.
\end{equation}
The constant $\beta>0$ is chosen as in the introduction, so that $w_1(r)=\beta^{\frac{1}{p-1}}r^{-\frac{2s}{p-1}}$ is a solution.  However, this $w_1$ is singular at the origin. Our main result in this section is the construction of an entire solution:

\begin{proposition}\label{prop:existentire}
There exists a radially symmetric, decreasing in the radial variable, entire solution $\bar w$ to \eqref{eq:LE} such that
\begin{equation*}
\bar w(r)\leq Cr^{-\frac{2s}{p-1}} \mbox{ as } r\to+\infty.
\end{equation*}
\end{proposition}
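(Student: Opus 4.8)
The plan is to obtain the entire solution $\bar w$ as the limit of a bifurcation branch, following the Schaaf-type argument used in \cite{acdfgw}. First I would reformulate \eqref{eq:LE} in terms of the natural rescaling: for a parameter $\lambda>0$, set $\bar w_\lambda(x)=\lambda^{\frac{2s}{p-1}}\bar w(\lambda x)$, so that it suffices to construct a single solution and then use scaling invariance of the equation. The construction itself begins by considering the problem on a large ball $B_R$ with, say, a Dirichlet exterior condition $w=0$ in $\R^n\setminus B_R$, and studying the bifurcation of positive radial solutions from the trivial branch as a suitable eigenvalue/amplitude parameter varies. Since $p+1>2^*(s)$ is supercritical, a direct variational or mountain-pass approach fails, so instead I would fix the value $w(0)=a$ at the origin, or equivalently fix the $L^\infty$ norm, and use a shooting-type or continuation argument in $a$: by scaling, all such solutions are related, so the relevant quantity to control is whether the solution stays positive and bounded on all of $\R^n$ as $R\to\infty$.

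The key analytic input, replacing phase-plane analysis, is Schaaf's argument adapted to the nonlocal setting. Concretely, I would pass to the conformally equivalent formulation: writing $r=|x|$, $t=-\log r$, and $v(t)=r^{\frac{2s}{p-1}}w(r)$, equation \eqref{eq:LE} becomes a nonlocal ODE for $v$ on the cylinder, for which \cite{acdfgw} provides a conserved Hamiltonian $\mathcal H(v)$. The singular solution $w_1$ corresponds to the constant $v\equiv\beta^{1/(p-1)}$, which is an equilibrium of this nonlocal ODE. I would then show that for the bifurcating family, as the amplitude parameter increases, the corresponding trajectories $v(t)$ remain trapped in a region of bounded, positive values — this uses the monotonicity of the Hamiltonian along with the sign structure of the nonlinearity $w^p$ and the maximum principle for $(-\Delta)^s$ — and extract a limit as $R\to\infty$ that is defined on all of $\R^n$, positive, radial, and decreasing in $r$ (radial monotonicity follows from moving planes / the fact that the approximating ball solutions are radially decreasing, preserved in the limit). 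The uniform-in-$R$ a priori bound $\bar w\le C r^{-\frac{2s}{p-1}}$ comes from comparison with the explicit singular solution $w_1$: since $\bar w$ is bounded near the origin while $w_1$ blows up there, and both solve the same equation, a comparison-principle argument (or the barrier $C w_1$ for $C$ large) forces $\bar w(r)\le C r^{-2s/(p-1)}$ for $r$ large.

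I expect the main obstacle to be controlling the global behavior of the bifurcation branch — i.e., ensuring that the branch does not terminate, blow up, or produce sign-changing solutions before one reaches a genuine entire solution. In the local case this is immediate from phase-plane analysis (one literally sees the orbit connecting the origin to the singular solution), but in the nonlocal case one must substitute a careful combination of the Hamiltonian monotonicity from \cite{acdfgw}, degree-theoretic / Rabinowitz global bifurcation information to track the connected component of the solution set, and nonlocal maximum principles to preserve positivity and radial monotonicity under limits. A secondary technical point is justifying the passage to the limit $R\to\infty$ in the nonlocal operator, which requires uniform local regularity estimates (e.g. interior Hölder and Schauder estimates for $(-\Delta)^s$) together with the uniform bound $\bar w_R\le C r^{-2s/(p-1)}$ to control the tails of the singular integral defining $(-\Delta)^s$.
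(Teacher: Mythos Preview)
Your overall architecture---a bifurcation branch for an auxiliary boundary problem, followed by a blow-up/limiting procedure---matches the paper's, but two key pieces are misidentified and this creates a real gap.

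First, you have Schaaf's argument doing the wrong job. In the paper (Lemma~\ref{lem:uniqueness}) Schaaf's argument is a Poho\v{z}aev-identity computation used to prove \emph{uniqueness} of solutions to the auxiliary Gelfand-type problem
\[
\Ds w=\lambda(1+w)^p\ \text{in }B_1,\qquad w=0\ \text{in }\R^n\setminus B_1,
\]
for small $\lambda$. This uniqueness is exactly what forces the Rabinowitz global-bifurcation continuum (Lemma~\ref{lem:bifurcation}) to be unbounded in the $L^\infty$ direction while $\lambda$ stays trapped in $[\lambda_0,\lambda^*]$, yielding the sequence $(\lambda_j,w_j)$ with $\|w_j\|_\infty\to\infty$ that one then blows up. The Hamiltonian/conformal-cylinder formulation you invoke plays \emph{no role} in the existence step; it is used only in Section~3 to pin down the asymptotic constant $\beta^{1/(p-1)}$. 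Your proposal to use Hamiltonian monotonicity to trap the $v(t)$ trajectories and thereby produce the entire solution is not how the argument runs, and it is unclear it could be made to work: the Hamiltonian is monotone, not conserved, and gives information about limits at $t\to\pm\infty$ rather than a priori bounds along a bifurcation branch.

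Second, your auxiliary problem is too vague. Working on $B_R$ with a ``shooting in $a=w(0)$'' is not a well-posed nonlocal procedure. The paper instead fixes the domain $B_1$, takes $\lambda$ as the bifurcation parameter in the Gelfand nonlinearity $\lambda(1+w)^p$ (which has an obvious supersolution $(1-|x|^2)_+^s$ for small $\lambda$, giving the minimal branch), and then rescales the blowing-up solutions by $R_j=m_j^{(p-1)/(2s)}$; this is equivalent by scaling to your $R\to\infty$ idea but makes the compactness and positivity automatic. Finally, the decay bound $\bar w\le Cr^{-2s/(p-1)}$ is not obtained by comparison with $w_1$ (a nonlocal comparison with a singular barrier is delicate) but is quoted from \cite[Lemma~2.6]{acdfgw}, which gives a uniform bound already at the level of the approximating $W_j$.
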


To prove this proposition, we follow the method in \cite{acdfgw}.  First we consider the Dirichlet problem
\begin{equation}\label{eq:auxprob}\begin{cases}
\Ds{w}=\lambda(1+w)^p&\textin{B_1},\\
w=0&\textin\R^n\setminus{B_1}.
\end{cases}\end{equation}
The existence for this equation follows from classical arguments (we refer to \cite{RosOton-Serra2} for details): for $\lambda>0$ small enough, since $(1-|x|^2)_+^s$ is a positive super-solution, there exists a minimal solution $w_\lambda$. Moreover, $w_\lambda$ is non-decreasing in $\lambda$. Thus one can find a $\lambda^*>0$ such that (i) the minimal solution $w_\lambda$ exists for each $\lambda\in(0,\lambda^*)$, and $w_\lambda$ is radially symmetric and decreasing in the radial variable; (ii) for $\lambda>\lambda^*$, \eqref{eq:auxprob} has no solutions.

An argument of Schaaf \cite{Schaaf} shows  uniqueness:

\begin{lemma}[Uniqueness]\label{lem:uniqueness}
There exists $\lambda_0>0$ depending only on $n$, $s$ and $p$, such that for all small $\lambda\in[0,\lambda_0)$, $w_\lambda$ is the unique solution to \eqref{eq:auxprob}.
\end{lemma}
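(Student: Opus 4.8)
The plan is to adapt Schaaf's uniqueness argument, as carried out for the singular fractional Yamabe problem in \cite{acdfgw}, to the present Dirichlet problem \eqref{eq:auxprob}. Fix a small $\lambda$ and suppose $w_\lambda$ is the minimal solution and $W$ is any other solution; both are radial and, by the maximum principle for $\Ds$, both are decreasing in $r$, with $W \geq w_\lambda$. I would argue by contradiction, assuming $W \not\equiv w_\lambda$. The key object is the linearization along the family of solutions: since $\Ds$ is linear, the difference $v := W - w_\lambda \geq 0$ satisfies
\[
\Ds v = \lambda\big[(1+W)^p - (1+w_\lambda)^p\big] = \lambda\, c(x)\, v \textin B_1, \qquad v = 0 \textin \R^n\setminus B_1,
\]
where $c(x) = p\int_0^1 (1 + w_\lambda + t v)^{p-1}\,dt \geq p(1+w_\lambda)^{p-1} > 0$. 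So $v$ is a positive solution of a linear fractional Dirichlet eigenvalue-type problem, and the strategy is to show that for $\lambda$ small the relevant operator $\Ds - \lambda c(x)$ has no nontrivial kernel with zero exterior data, i.e.\ its first Dirichlet eigenvalue stays positive.

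The heart of the matter is a quantitative lower bound for the first Dirichlet eigenvalue $\mu_1(\lambda)$ of $\Ds$ on $B_1$ against the weight $\lambda c(x)$. Since $0 \leq w_\lambda \leq w_{\lambda_0}$ is uniformly bounded on $B_1$ for $\lambda \leq \lambda_0$ (using that $w_\lambda$ is nondecreasing in $\lambda$ and, say, $\lambda_0 < \lambda^*$ with a uniform sup bound from the super-solution $(1-|x|^2)_+^s$ and elliptic estimates from \cite{RosOton-Serra2}), the weight satisfies $\lambda c(x) \leq C \lambda$ pointwise with $C = C(n,s,p)$. On the other hand, the first Dirichlet eigenvalue $\Lambda_1 > 0$ of $\Ds$ on $B_1$ is a fixed positive constant. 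Hence by the variational (Rayleigh quotient) characterization,
\[
\int_{\R^n} |\Dshalf \varphi|^2\,dx - \lambda \int_{B_1} c(x)\varphi^2\,dx \geq \big(\Lambda_1 - C\lambda\big)\int_{B_1}\varphi^2\,dx
\]
for all $\varphi$ supported in $B_1$, which is strictly positive once $\lambda < \Lambda_1 / C =: \lambda_0$. Testing this inequality with $\varphi = v$ and using the equation $\Ds v = \lambda c(x) v$ gives $0 = \int |\Dshalf v|^2 - \lambda\int c v^2 \geq (\Lambda_1 - C\lambda)\int v^2 > 0$ unless $v \equiv 0$, a contradiction. This forces $W \equiv w_\lambda$, proving uniqueness for $\lambda \in [0,\lambda_0)$.

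The main obstacle I anticipate is the claim that the second solution $W$ is automatically comparable to the minimal one, i.e.\ that $W \geq w_\lambda$ and that $W$ is radial and bounded on $B_1$ — the comparison $W \geq w_\lambda$ follows since $w_\lambda$ is the minimal solution, radial symmetry follows from moving planes or the uniqueness of the radial minimal branch, but the a priori $L^\infty$ bound on an arbitrary solution for $\lambda$ small requires a little care (one can use that $(1+W)^p \leq \lambda^{-1}$-independent bounds are \emph{not} free; instead one runs the sub/supersolution monotone iteration to see any solution lies between $w_\lambda$ and the ordered supersolution, or invokes the regularity theory of \cite{RosOton-Serra2} directly). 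A secondary technical point is justifying the integration by parts $\int \varphi \Ds v = \int \Dshalf\varphi\,\Dshalf v$ with the exterior Dirichlet condition, which is standard for the fractional Laplacian once $v$ has the needed regularity (again from \cite{RosOton-Serra2}). Everything else is a direct transcription of the argument in \cite{acdfgw}.
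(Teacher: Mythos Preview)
Your argument has a genuine gap at exactly the point you flag as an obstacle, and the proposed fix does not work. The eigenvalue/Rayleigh quotient step needs a pointwise bound $c(x)\leq C$ with $C$ independent of the unknown solution $W$, but $c(x)=p\int_0^1(1+w_\lambda+tv)^{p-1}\,dt$ depends on $v=W-w_\lambda$; if $W$ is large then $c(x)$ is large, and no uniform $C$ is available. The suggestion that ``any solution lies between $w_\lambda$ and the ordered supersolution'' is not correct: sub/supersolution iteration produces \emph{one} solution between the barriers, it does not trap arbitrary solutions. In fact the bifurcation lemma that follows in the paper exhibits solutions with $\|w\|_{L^\infty}\to\infty$ while $\lambda$ stays in a compact interval, so an a priori $L^\infty$ bound on arbitrary solutions is simply false in this range, and your inequality $\lambda\int c\,v^2\leq C\lambda\int v^2$ cannot hold uniformly.

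The paper (following Schaaf and \cite{acdfgw}) avoids this by using the fractional Poho\v{z}aev identity of Ros-Oton--Serra rather than an eigenvalue comparison. Writing $f(v)=(1+w_\lambda+v)^p-(1+w_\lambda)^p$ and $F(v)=\int_0^v f$, one exploits the supercriticality $p+1>2^*(s)$: since $F(v)/(vf(v))\to\tfrac{1}{p+1}<\tfrac{1}{2^*(s)}$ as $v\to+\infty$, one can choose $\sigma\in(\tfrac{1}{p+1},\tfrac{1}{2^*(s)})$ so that $F(v)-\sigma vf(v)<0$ for $v\geq M$. The Poho\v{z}aev inequality combined with Sobolev then reduces matters to the region $\{v<M\}$, where $F(v)$ is quadratic in $v$ and H\"older gives the smallness in $\lambda$. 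The point is that supercriticality supplies a favorable sign for large $v$, so no a priori bound on $W$ is needed. Your linear eigenvalue approach would work for sublinear or subcritical nonlinearities but is not adapted to the supercritical regime here.
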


The proof this lemma is postponed to the end of the section.

\medskip

Now we consider the integral formulation of \eqref{eq:auxprob},
\begin{equation}\label{eq:auxprob2}
\begin{cases}
w=\lambda T(w)
    &\textin B_1,\\
w=0
    &\textin \R^n\setminus B_1,
\end{cases}
\end{equation}
where (see for example \cite{Bucur}),
\[T(w)(x)=C_{n,s}\int_{B_1}
    \left(\int_0^{r_0(x,y)}
        \dfrac{t^{s-1}}{(t+1)^{\frac{n}{2}}}\,dt
    \right)
    \dfrac{(1+w(y))^{p}}{|x-y|^{n-2s}}\,dy,
\quad
r_0(x,y)=\dfrac{(1-|x|^2)(1-|y|^2)}{|x-y|^2}.
\]
We perform a bifurcation argument in the Banach space of non-negative and radially symmetric, non-increasing functions supported on $B_1$,
\begin{equation*}
E=\set{w\in{\mathcal C}(\R^n):w(x)=\tilde{w}(|x|),\,\tilde{w}(r_1)\leq \tilde{w}(r_2)\text{ for } r_1>r_2,\,w\geq0\textin{B_1},\,w=0\textin\R^n\setminus{B_1}}.
\end{equation*}

\begin{lemma}[Bifurcation]\label{lem:bifurcation}
There exists a sequence of solutions $(\lambda_j,w_j)$ of \eqref{eq:auxprob} in $(0,\lambda^*]\times{E}$ such that
\[\lim_{j\to\infty}\lambda_j=\lambda_\infty\in[\lambda_0,\lambda^*]
\quad\textand\quad
\lim_{j\to\infty}\norm[L^\infty(B_1)]{w_j}=+\infty,\]
where $\lambda_0$ is given in Lemma \ref{lem:uniqueness}.
\end{lemma}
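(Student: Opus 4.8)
The plan is to apply Rabinowitz-type global bifurcation theory to the compact operator $\lambda \mapsto \lambda T(\cdot)$ on the cone $E$, exactly as in \cite{acdfgw}. First I would verify that $T: E \to E$ is well defined, completely continuous, and order preserving: the kernel $\int_0^{r_0(x,y)} t^{s-1}(t+1)^{-n/2}\,dt$ is a positive, bounded function of $(x,y)$ that degenerates at the boundary like the $s$-distance, and standard estimates for the Riesz-type potential $|x-y|^{-(n-2s)}$ show that $T$ maps bounded sets of $E$ into equicontinuous families that preserve radial monotonicity and the zero exterior condition; compactness follows from Arzel\`a--Ascoli. This sets up \eqref{eq:auxprob2} as a genuine fixed-point equation in the Banach cone $E$.

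Next I would recall the local structure near the trivial branch. For $\lambda$ small, Lemma \ref{lem:uniqueness} gives uniqueness, so the minimal solution branch $\{(\lambda, w_\lambda) : \lambda \in [0,\lambda_0)\}$ is the \emph{only} branch of solutions emanating from $(0,0)$, and it is a continuum in $[0,\lambda^*] \times E$ by the monotonicity and a priori bounds from the sub/supersolution construction. The heart of the argument is then a Leray--Schauder degree / connectedness dichotomy: the connected component $\mathcal{C}$ of the solution set containing this branch cannot terminate in the interior of $(0,\lambda^*) \times E$ at a point with finite norm (that would contradict either the implicit function theorem at a regular point, or, at a possible turning point, the compactness and the structure of the fixed-point index), it cannot return to $\lambda = 0$ with positive norm (again by Lemma \ref{lem:uniqueness}), and it cannot cross $\lambda = \lambda^*$ from the right (by statement (ii), no solutions exist for $\lambda > \lambda^*$). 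Hence $\mathcal{C}$ must be unbounded in the $E$-direction while its $\lambda$-projection stays in $[0,\lambda^*]$. Extracting a sequence $(\lambda_j, w_j) \in \mathcal{C}$ with $\|w_j\|_{L^\infty(B_1)} \to \infty$, passing to a subsequence so that $\lambda_j \to \lambda_\infty \in [0,\lambda^*]$, and invoking Lemma \ref{lem:uniqueness} once more to rule out $\lambda_\infty < \lambda_0$ (since near $\lambda=0$ the unique solution has small norm, the continuum cannot accumulate there with blowing-up norm), we conclude $\lambda_\infty \in [\lambda_0, \lambda^*]$.

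The main obstacle I anticipate is making the global continuation rigorous in the cone $E$ rather than in a linear space: one must check that the Krasnoselskii--Rabinowitz machinery (or, following \cite{acdfgw}, a direct Whyburn-type connectedness argument on the closed solution set) applies with the correct notion of degree on $E$, and in particular that the only way the component can escape every bounded region is through blow-up of the $L^\infty$ norm — as opposed to loss of compactness at the boundary of $B_1$ or loss of the monotonicity/positivity constraints. Controlling the behavior near $\partial B_1$ uses that $(1-|x|^2)_+^s$ is an explicit supersolution so that $w_\lambda \le$ (const) everywhere the component stays below a given level, and the monotone cone is preserved by $T$, so these pathologies do not occur and blow-up of $\|w_j\|_{L^\infty(B_1)}$ is indeed the only alternative. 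The remaining details — equicontinuity constants, the precise degree computation at $\lambda$ near $0$, and the extraction of the convergent subsequence for $\lambda_j$ — are routine and can be quoted from \cite{acdfgw}.
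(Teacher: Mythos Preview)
Your proposal is correct and follows essentially the same route as the paper: compactness of $T$, Rabinowitz's global bifurcation theorem to produce an unbounded continuum from $(0,0)$, and then the uniqueness Lemma~\ref{lem:uniqueness} together with nonexistence for $\lambda>\lambda^*$ to pin $\lambda_\infty\in[\lambda_0,\lambda^*]$. The paper's own proof is a two-line sketch citing \cite[Theorem~6.2]{Rabinowitz} and \cite[Lemma~2.8]{acdfgw}; the cone-versus-linear-space and boundary-regularity issues you raise are real technicalities but are exactly the ones absorbed into those citations.
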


\begin{proof}
The proof
follows closely \cite[Lemma 2.8]{acdfgw}. In the integral formulation \eqref{eq:auxprob2},
it is readily checked that
$T$ defines a (nonlinear) compact operator. Moreover, the pair $(0,0)$ is a solution. Now, \cite[Theorem 6.2]{Rabinowitz} yields an unbounded continuum of solutions in $\R \times E$ connecting $(0,0)$. Since the minimal solution $w_\lambda$ is unique for $\lambda\in(0,\lambda_0)$ by Lemma \ref{lem:uniqueness} and there does not exist any solution for $\lambda>\lambda^*$, we must have $\lambda\in[\lambda_0,\lambda^*]$ whenever $\norm[L^\infty(B_1)]{w_\lambda}$ is large. 
\end{proof}

Now we are ready to prove Proposition \ref{prop:existentire} using a blow-up argument.


\begin{proof}[Proof of Proposition \ref{prop:existentire}]
Let $(\lambda_j,w_j)$ be as in Lemma \ref{lem:bifurcation}. Define
\[m_j=\norm[L^{\infty}(B_1)]{w_j}=w_j(0)
\quad\textand\quad
R_j=m_j^{\frac{p-1}{2s}}.\]
Then
\[W_j(x)=\lambda_j^{\frac{1}{p-1}}m_j^{-1}w_j\left(\frac{x}{R_j}\right)\]
satisfies $0\leq{W_j}\leq{W_j(0)}=1$ and
\[\begin{cases}
\Ds{W_j}=\left(\lambda_j^{\frac{1}{p-1}}m_j^{-1}+W_j\right)^p
    &\textin{B_{R_j}},\\
W_j=0
    &\textin\R^n\setminus{B_{R_j}}.
\end{cases}\]
By elliptic regularity (see for instance \cite{RosOton-Serra2}), $W_j\in\mathcal {C}_{\loc}^{\alpha}(\R^n)$ for some $\alpha>0$ and hence, by passing to a subsequence, $W_j\to{\bar w}$ in $\mathcal C_\loc(\R^n)$ for some positive and radially decreasing function $\bar w$ which satisfies \eqref{eq:LE}. Moreover, the solution $\bar w$ is radially symmetric and decreasing. In addition, from Lemma 2.6 of \cite{acdfgw} with $\beta=0$ there, one can get that
\begin{equation*}
\bar w(x)\leq C|x|^{-\frac{2s}{p-1}} \mbox{ for }x\in \mathbb R^n.
\end{equation*}
\end{proof}

We finally go back to uniqueness and the proof of Lemma \ref{lem:uniqueness}:

\begin{proof}[Proof of Lemma \ref{lem:uniqueness}]
Suppose $w=w_\lambda+v$ is another solution. Then $v$ solves
\[\begin{cases}
\Ds{v}=\lambda{f(v)}&\textin{B_1},\\
v=0&\textin\R^n\setminus{B_1},
\end{cases}\]
where $f(v)=(1+w_\lambda+v)^p-(1+w_\lambda)^p$. Let us write $F(v)=\int_0^{v}f$. For any $\sigma\in\R$, the Poho\v{z}aev identity \cite{RosOton-Serra1} reads
\begin{equation*}
\left(\dfrac{1}{2^*(s)}-\sigma\right)\int_{\R^n}v\Ds{v}\,dx\leq\lambda\int_{B_1}\left(F(v)-\sigma{v}f(v)\right)\,dx.
\end{equation*}
The left hand side is estimated using the fractional Sobolev inequality so that
\begin{equation*}
\left(\dfrac{1}{2^*(s)}-\sigma\right)\left(\int_{B_1}v^{2^*(s)}\,dx\right)^{\frac{2}{2^*(s)}}\,dx\leq\lambda{C}\int_{B_1}\left(F(v)-\sigma{v}f(v)\right)\,dx.
\end{equation*}
For the right hand side, since $\displaystyle\lim_{v\to+\infty}\frac{F(v)}{vf(v)}=\frac{1}{p+1}$,
for any $\eps>0$ there exists $M=M(\eps)$ such that for any $v\geq{M}$, $F(v)\leq\frac{1+\eps}{p+1}vf(v)$. Since $p+1>2^*(s)$, there exist $\eps$ and $\sigma$ such that $\frac{1+\eps}{p+1}<\sigma<\frac{1}{2^*(s)}$. In other words, fixing a choice of $\eps$ and $\sigma$, there is an $M$ such that $F(v)-\sigma{v}f(v)<0$ whenever $v\geq{M}$. Now, since $F(v)$ is quadratic in $v$, namely,
\[F(v)=v^2\int_0^1\int_0^1pt_1(1+w_\lambda+t_1t_2v)^{p-1}\,dt_1dt_2,\]
we have
\begin{equation*}
\begin{split}
\left(\dfrac{1}{2^*(s)}-\sigma\right)\left(\int_{B_1}v^{2^*(s)}\,dx\right)^{\frac{2}{2^*(s)}}\,dx
&\leq\lambda{C}\int_{B_1\cap\set{v<M}}\left(F(v)-\sigma{v}f(v)\right)\,dx\\
&\leq\lambda{C}\int_{B_1\cap\set{v<M}}v^2\,dx\\
&\leq\lambda{C}\left(\int_{B_1\cap\set{v<M}}v^{2^*(s)}\,dx\right)^{\frac{2}{2^*(s)}}|B_1\cap\set{v<M}|^{1-\frac{2}{2^*(s)}}\\
&\leq\lambda{C}\left(\int_{B_1\cap\set{v<M}}v^{2^*(s)}\,dx\right)^{\frac{2}{2^*(s)}}.
\end{split}
\end{equation*}
Therefore, $v\equiv0$ if $\lambda>0$ is chosen small enough.
\end{proof}

\section{Conformal geometry results and the asymptotic behavior for the solution}

From the last section, we know that there exists a smooth solution $\bar w$ of $(-\Delta)^s w=w^p $ in $\R^n $ which satisfies $\bar{w}(0)=1$ and $\bar{w}(x)\leq C|x|^{-\frac{2s}{p-1}}$. Here we will show that it has a precise asymptotic behavior at infinity. For the rest of the paper, we will drop the bar and simply denote this particular solution by $w$. More precisely,

\begin{proposition}\label{prop:asymptotics}
Let $w$ be the solution constructed in Proposition \ref{prop:existentire}. Then
\begin{equation*}
\lim_{|x|\to \infty}w(x)|x|^{\frac{2s}{p-1}}=\beta^{\frac{1}{p-1}},
\end{equation*}
for the constant $\beta$ given in the introduction, and which corresponds to the coefficient of the singular solution to \eqref{eq:LE}.
\end{proposition}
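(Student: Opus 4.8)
The plan is to follow the conformal-geometry / Hamiltonian approach of \cite{acdfgw}. We pass to the Emden--Fowler (cylindrical) coordinates $r=|x|$, $t=-\log r$, and set $v(t)=r^{\frac{2s}{p-1}}w(r)$. Under these coordinates the fractional equation $\Ds w=w^p$ becomes a nonlocal equation on the cylinder $\R\times\s^{n-1}$ for $v$, with the singular solution $w_1$ corresponding to the constant $v\equiv\beta^{\frac{1}{p-1}}$. Because $w$ is radial, $v=v(t)$ depends on $t$ only, so we are looking at a nonlocal ODE. From Proposition \ref{prop:existentire} we already know $0<v(t)\leq C$ for all $t$, i.e. $v$ is globally bounded; since $w(0)=1$ and $w$ is decreasing, we also know $v(t)\to 0$ as $t\to+\infty$ (the origin) and that $v$ stays bounded away from the behaviour of a smooth solution near $t\to+\infty$. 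The goal is to prove $v(t)\to\beta^{\frac{1}{p-1}}$ as $t\to-\infty$ (i.e. $|x|\to\infty$).

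The key tool is the Hamiltonian (monotonicity) quantity $\mathcal H(t)$ associated to the nonlocal ODE, constructed in \cite{acdfgw}: one extends $w$ to the upper half-space via the Caffarelli--Silvestre extension, writes the extension in the cylindrical variables, and defines $\mathcal H(t)$ as a weighted integral over the cross-section involving the extension, its derivatives, and the potential term $\frac{1}{p+1}v^{p+1}$. The two crucial facts, imported from \cite{acdfgw}, are: (i) $\mathcal H$ is monotone (non-increasing) in $t$, hence has limits $\mathcal H(\pm\infty)$; and (ii) $\mathcal H$ is constant in $t$ if and only if $v$ is constant, and more quantitatively $\mathcal H'(t)$ controls a norm of $\p_t$(extension). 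First I would verify that $\mathcal H$ is well-defined and finite along $w$ using the bound $w\le C|x|^{-\frac{2s}{p-1}}$ and elliptic estimates for the extension. Then, using boundedness of $v$ together with the nonlocal equation and regularity theory, I would show $v$ and its relevant derivatives are uniformly bounded on $(-\infty,T]$, so that along any sequence $t_k\to-\infty$ the translates $v(\cdot+t_k)$ converge (in $C^\infty_{\loc}$, up to subsequence) to an entire bounded solution $v_\infty$ of the same nonlocal ODE on $\R$. Since $\mathcal H$ is monotone and bounded, $\mathcal H'\to 0$ along the sequence, which forces the limit $v_\infty$ to have $\mathcal H'\equiv 0$, hence $v_\infty$ is constant; the only positive constant solution of the cylinder equation is $\beta^{\frac{1}{p-1}}$, so $v_\infty\equiv\beta^{\frac{1}{p-1}}$. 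A standard argument (the limit is independent of the subsequence, since the constant is the unique possible limit) then upgrades this to $\lim_{t\to-\infty}v(t)=\beta^{\frac{1}{p-1}}$, which is exactly \eqref{asymptotics-introduction}.

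There is one gap to close: a priori one must rule out that the ``blow-down'' limit $v_\infty$ is the trivial solution $v_\infty\equiv 0$, or that $v(t)$ oscillates without $\mathcal H'\to 0$ being enough. To exclude $v_\infty\equiv 0$ I would use that $w$ solves the subcritical-type Dirichlet problems $W_j$ from Proposition \ref{prop:existentire} with $W_j(0)=1$, a uniform positivity/Harnack estimate for $(-\Delta)^s$, and the fact that $\|w_j\|_{L^\infty}\to\infty$ forces the rescaled solutions not to decay too fast — equivalently, one shows $\liminf_{t\to-\infty}v(t)>0$ directly, e.g. by comparing $w$ with $w_1$ using the maximum principle, since $w$ agrees with a solution that cannot be dominated by a multiple of the singular solution with too small a constant. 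Alternatively, \cite{acdfgw} provides a lower barrier argument in cylindrical coordinates which I would adapt verbatim. The main obstacle, and the step I expect to require the most care, is precisely the transplantation of the Hamiltonian machinery of \cite{acdfgw} from the subcritical range to the supercritical exponent $p>\frac{n+2s}{n-2s}$: one must check that every integral defining $\mathcal H$ still converges with the available decay $w\le C|x|^{-\frac{2s}{p-1}}$ (note $\frac{2s}{p-1}<\frac{n-2s}{2}$ in the supercritical regime, so the decay is \emph{slower} than in the subcritical case, and the integrability of the cross-sectional integrals and of $\mathcal H'$ must be re-examined), and that the sign of $\mathcal H'$ and the rigidity statement ``$\mathcal H'\equiv 0\Rightarrow v$ constant'' do not use subcriticality. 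Once these are in place, the convergence argument is routine.
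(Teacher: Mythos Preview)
Your approach is essentially the same as the paper's: pass to cylindrical coordinates, use the Hamiltonian monotonicity from \cite{acdfgw} together with boundedness of $v$ to extract a constant limit along sequences $t_k\to-\infty$, and identify the constant as $\beta^{1/(p-1)}$.

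Two points of comparison. First, the monotonicity goes the \emph{other} way: in the supercritical range $p>\tfrac{n+2s}{n-2s}$ the coefficient $-\tfrac{n-2s}{2}+\tfrac{2s}{p-1}$ in \eqref{ham_der0} is negative, so $\partial_t H^*\ge 0$ and $H^*$ is \emph{increasing} in $t$. You flagged the sign as something to re-examine, and indeed this flip is exactly what the supercriticality buys; the rest of the argument is unchanged. Second, for ruling out $v_\infty\equiv 0$ the paper's route is shorter than the Harnack/barrier arguments you propose: since $w$ is smooth at the origin, $v(t)\to 0$ as $t\to+\infty$ and hence $H^*(+\infty)=0$; since $v$ is not constant, $\partial_t H^*$ is not identically zero, so $H^*(-\infty)<H^*(+\infty)=0$. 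But the Hamiltonian of the limiting constant profile $V_\infty$ equals $H^*(-\infty)$, and the zero solution has Hamiltonian $0$, so $V_\infty\not\equiv 0$. This avoids any comparison or lower-barrier machinery. Finally, your integrability worry is resolved exactly as you suggest: $v$ bounded (from Proposition~\ref{prop:existentire}) gives $V^*$ bounded, elliptic regularity then bounds $\partial_t V^*,\partial_{\rho^*}V^*$, and $H^*$ is uniformly bounded; no subtle decay of the cross-sectional integrals is needed.
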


We will basically use the results  in Sections 3-4 of \cite{acdfgw}, and follow their notation accordingly. Set $r=|x|$, and let $P_s^{g_0}$ be the conformal fractional Laplacian on the cylinder $\mathbb R\times\mathbb S^{n-1}$. Then, by its conformal properties one has that
\begin{equation}\label{conjugation1}
P_s^{g_0}(r^{\frac{n-2s}{2}}w)=r^{\frac{n+2s}{2}}(-\Delta)^{s} w.
\end{equation}
Set also $r=e^{-t}$ and
\begin{equation}\label{wv}
v=e^{-\frac{2s}{p-1}t}w(e^{-t}),
\end{equation}
and consider one further conjugation
\begin{equation*}\label{tilde-P}
\tilde P_s^{g_0}(v):=
e^{(\frac{n-2s}{2}-\frac{2s}{p-1})t}P_s^{g_0}\big(e^{(-\frac{n-2s}{2}+\frac{2s}{p-1})t}v\big)
=r^{\frac{2s}{p-1}p}(-\Delta)^s w.
\end{equation*}
Then the equation for $w$ transforms into an equation for $v$ on the cylinder, more precisely,
\begin{equation}\label{conformal-laplacian}
\tilde P_s^{g_0}(v)=v^p, \quad\text{in } \mathbb R \times \mathbb S^{n-1}.
\end{equation}

The operator $\tilde P_s^{g_0}$ can be understood as a Dirichlet-to-Neumann for an extension problem in the spirit of the construction of the fractional Laplacian by \cite{Caffarelli-Silvestre,Chang-Gonzalez,Gonzalez:survey,DelaTorre-Gonzalez}. Without being very precise on the extension manifold $X^{n+1}$, with metric $\bar g^*$, and the extension variable $\rho^*$, we recall the following proposition in \cite{acdfgw}:

\begin{proposition} \label{prop:divV*}
Let $v$ be a smooth function on the cylinder $M=\mathbb R\times\mathbb S^{n-1}$.
The extension problem
\begin{equation*}\label{divV*}
\left\{\begin{array}{@{}r@{}l@{}l}
 -\divergence_{\bar{g}^*}((\rho^*)^{1-2s}\nabla_{\bar{g}^*}V^*)
-(\rho^*)^{-(1+2s)}\left(\tfrac{4\rho}{4+\rho^2}\right)^{2}2\Big(-\frac{n-2s}{2}+\tfrac{2s}{p-1}\Big)\,\partial_t V^*&\,=0\quad &\text{in } (X,\bar g^*), \medskip\\
V^*|_{\rho=0}&\,=v\quad &\text{on }M,
\end{array}\right.
\end{equation*}
has a unique solution $V^*$. Here $\rho=\rho(\rho^*)$ is a positive function of $\rho^*$, $\rho^*\in(0,\rho_0^*)$.
 Moreover, for its Neumann data,
\begin{equation*}\label{Neumann-V*}
\tilde P_s^{g_0}(v)=-d_s \lim_{\rho^* \to 0}(\rho^*)^{1-2s} \partial_{\rho^*} (V^*)+\beta v,
\end{equation*}
for the constant
\begin{equation}\label{constant-extension}
d_s=\frac{2^{2s-1}\Gamma(s)}{s\Gamma(-s)},
\end{equation}
and $\beta$ as above.
\end{proposition}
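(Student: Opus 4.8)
The plan is to deduce Proposition~\ref{prop:divV*} from the extension characterization of the \emph{conformal} fractional Laplacian $P_s^{g_0}$ on the cylinder $M=\mathbb R\times\mathbb S^{n-1}$, established in \cite{acdfgw} in the framework of \cite{Caffarelli-Silvestre,Chang-Gonzalez,DelaTorre-Gonzalez}, together with the conjugation that links $P_s^{g_0}$ to the twisted operator $\tilde P_s^{g_0}$. (Equivalently, as in \cite{acdfgw}, one may run the scattering/extension construction for $\tilde P_s^{g_0}$ directly with the twist built in; both routes yield the same $V^{*}$.)

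\emph{Step 1: the extension for $P_s^{g_0}$.} Since $(M,g_0)$ is the conformal infinity of a Poincar\'e--Einstein manifold, there is an extension $X^{n+1}$ with $\partial X=M$, a metric $\bar{g}^{*}$, a special defining function $\rho^{*}\in(0,\rho_0^{*})$, and an associated positive function $\rho=\rho(\rho^{*})$ --- in the relevant frame the substitution $\rho=2\tan(\theta/2)$ makes the factor $\tfrac{4\rho}{4+\rho^2}=\sin\theta$ appear --- such that the weight $(\rho^{*})^{1-2s}$ is locally $A_2$, the Dirichlet problem $-\divergence_{\bar{g}^{*}}\big((\rho^{*})^{1-2s}\nabla_{\bar{g}^{*}}\Phi\big)=0$ in $X$ with $\Phi|_{\rho^{*}=0}=\phi$ on $M$ has a unique finite-energy solution $\Phi=\Phi[\phi]$, and
\[
P_s^{g_0}\phi=-d_s\lim_{\rho^{*}\to0}(\rho^{*})^{1-2s}\partial_{\rho^{*}}\Phi+\beta\,\phi ,
\]
with $d_s$ as in \eqref{constant-extension} and $\beta$ the (constant) fractional curvature of $(M,g_0)$. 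Evaluating this on the constant $\phi\equiv c$ --- whose extension is $c$, with vanishing Neumann data --- gives $P_s^{g_0}c=\beta c$, so $v\equiv\beta^{1/(p-1)}$ solves \eqref{conformal-laplacian}; hence $\beta$ is the constant of the introduction, namely the coefficient of the singular solution $w_1=\beta^{1/(p-1)}r^{-2s/(p-1)}$ of \eqref{eq:LE}.

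\emph{Step 2: conjugation.} Put $\gamma:=-\tfrac{n-2s}{2}+\tfrac{2s}{p-1}$, so that by definition $\tilde P_s^{g_0}(v)=e^{-\gamma t}\,P_s^{g_0}\big(e^{\gamma t}v\big)$. Given $v$, let $\Phi=\Phi[e^{\gamma t}v]$ and set $V^{*}:=e^{-\gamma t}\Phi$. Since $e^{\pm\gamma t}$ depends only on the base variable $t$, not on $\rho^{*}$, we get $V^{*}|_{\rho^{*}=0}=v$ at once, existence and uniqueness of $V^{*}$ are inherited from those of $\Phi[\cdot]$, and --- $\partial_{\rho^{*}}$ commuting with multiplication by $e^{-\gamma t}$ --- the trace identity transports verbatim to the Neumann formula
\[
\tilde P_s^{g_0}(v)=-d_s\lim_{\rho^{*}\to0}(\rho^{*})^{1-2s}\partial_{\rho^{*}}V^{*}+\beta v .
\]
For the interior equation, substitute $\Phi=e^{\gamma t}V^{*}$ into $\divergence_{\bar{g}^{*}}\big((\rho^{*})^{1-2s}\nabla_{\bar{g}^{*}}\Phi\big)=0$ and expand: the conjugation produces a first order drift $2\gamma\,(\rho^{*})^{1-2s}|\nabla_{\bar{g}^{*}}t|_{\bar{g}^{*}}^{2}\,\partial_tV^{*}$ plus a zeroth order term $\big(\gamma^{2}(\rho^{*})^{1-2s}|\nabla_{\bar{g}^{*}}t|_{\bar{g}^{*}}^{2}+\gamma\,\divergence_{\bar{g}^{*}}((\rho^{*})^{1-2s}\nabla_{\bar{g}^{*}}t)\big)V^{*}$. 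In the chosen frame $t$ is $\bar{g}^{*}$-orthogonal to the $\mathbb S^{n-1}$- and $\rho^{*}$-directions and satisfies the weighted relation $\divergence_{\bar{g}^{*}}((\rho^{*})^{1-2s}\nabla_{\bar{g}^{*}}t)=-\gamma\,(\rho^{*})^{1-2s}|\nabla_{\bar{g}^{*}}t|_{\bar{g}^{*}}^{2}$ --- the cylindrical analogue of the identity $\divergence(y^{1-2s}\nabla(-\log\rho))=-(n-2s)\,y^{1-2s}|\nabla(-\log\rho)|^{2}$ in the Caffarelli--Silvestre half-space --- so the zeroth order term cancels; and a direct computation with the explicit metric gives $(\rho^{*})^{1-2s}|\nabla_{\bar{g}^{*}}t|_{\bar{g}^{*}}^{2}=(\rho^{*})^{-(1+2s)}\big(\tfrac{4\rho}{4+\rho^2}\big)^{2}$. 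Thus the interior equation becomes exactly
\[
-\divergence_{\bar{g}^{*}}\big((\rho^{*})^{1-2s}\nabla_{\bar{g}^{*}}V^{*}\big)-(\rho^{*})^{-(1+2s)}\Big(\tfrac{4\rho}{4+\rho^2}\Big)^{2}2\Big(-\tfrac{n-2s}{2}+\tfrac{2s}{p-1}\Big)\partial_tV^{*}=0 ,
\]
which is the stated problem.

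The main obstacle is the geometric bookkeeping underlying Steps~1--2: tracking the chain of conformal rescalings from the Caffarelli--Silvestre half-space, through the ``cylinder over a hemisphere'' model of $X$, to the special defining function $\rho^{*}$ and metric $\bar{g}^{*}$ adapted to $\tilde P_s^{g_0}$; pinning down the function $\rho=\rho(\rho^{*})$ and the identity $(\rho^{*})^{1-2s}|\nabla_{\bar{g}^{*}}t|_{\bar{g}^{*}}^{2}=(\rho^{*})^{-(1+2s)}(\tfrac{4\rho}{4+\rho^2})^{2}$; checking that $\partial_t$ decouples so that the conjugation generates no cross terms; and verifying the weighted relation for $t$ that annihilates the zeroth order term. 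All of this is carried out in \cite[Sections~3--4]{acdfgw} for subcritical $p$, and since none of it uses the sign of $\gamma$ --- i.e.\ whether $p$ is sub- or supercritical --- the argument goes through unchanged here.
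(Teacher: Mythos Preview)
The paper does not prove this proposition: it is stated explicitly as a result \emph{recalled} from \cite{acdfgw} (``we recall the following proposition in \cite{acdfgw}''), with no argument supplied. Your proposal therefore goes well beyond what the paper does, and your concluding observation---that the construction in \cite[Sections~3--4]{acdfgw} nowhere uses the sign of $\gamma=-\tfrac{n-2s}{2}+\tfrac{2s}{p-1}$, hence carries over verbatim from the subcritical to the supercritical regime---is precisely the point needed here.

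One caution about your conjugation route. In Step~1 you take $\bar g^{*}$ and $\rho^{*}$ to come from the Poincar\'e--Einstein structure underlying $P_s^{g_0}$, hence independent of $p$; but then in Step~2 you invoke the identity
\[
\divergence_{\bar g^{*}}\big((\rho^{*})^{1-2s}\nabla_{\bar g^{*}}t\big)=-\gamma\,(\rho^{*})^{1-2s}\,|\nabla_{\bar g^{*}}t|^{2}_{\bar g^{*}}
\]
to kill the zeroth order term. The left-hand side is a purely geometric quantity determined by $\bar g^{*}$ and $\rho^{*}$, while the right-hand side depends on $p$ through $\gamma$; these cannot agree for all $p$ if $\bar g^{*}$ is fixed as in Step~1. (Your ``cylindrical analogue'' has the constant $-(n-2s)$, not $-\gamma$, which is the symptom.) What actually happens in \cite{acdfgw}---and what you allude to parenthetically at the outset---is that the special defining function $\rho^{*}$ and the metric $\bar g^{*}$ are chosen \emph{adapted to} $\tilde P_s^{g_0}$, so they already carry the dependence on $\gamma$; with that choice the extension equation comes out directly in the stated form, without any zeroth order term to cancel. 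If you want to keep the two-step presentation, you should make clear that the $\bar g^{*}$ in Step~1 is not the one appearing in the proposition, and that passing to the latter is part of the ``geometric bookkeeping'' you defer to \cite{acdfgw}. Either way the conclusion stands.
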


Now consider the spherical harmonic decomposition of $\mathbb S^{n-1}$. For this, let $\mu_m$  denote the $m$-th eigenvalue for $-\Delta_{\mathbb S^{n-1}}$, repeated according to multiplicity, and by $E_m(\theta)$ the corresponding eigenfunction. Then any $v$ defined on $\mathbb R\times\mathbb S^{n-1}$ can be written as $v(t,\theta)=\sum_m v_m(t)E_m(\theta)$. Denote by $\tilde P_s^{(m)}$ the projection of the operator $\tilde P_s^{g_0}$ onto each eigenspace, for $m=0,1,\ldots$.

Next, for $w$ a radially symmetric function, then $v$ defined as in  \eqref{wv} only depends on the variable $t=-\log r$ and thus, after projection onto spherical harmonics, only the zero-th projection survives and equation \eqref{conformal-laplacian} reduces to
\begin{equation}\label{problem-vv}
\tilde P_s^{(0)}(v)=v^p, \quad\text{in } \mathbb R.
\end{equation}
Recalling the expression for $\tilde P^{(0)}_\gamma$ from \cite[formula (4.3)]{acdfgw}, one has
\begin{equation}\label{convolution-K}
\tilde P^{(0)}_s(v)(t)=P.V. \int_{\mathbb R}\tilde{\mathcal K}_0(t-t')[v(t)-v(t')]\,dt'+\beta v(t)
\end{equation}
for the convolution kernel
\begin{equation*}
\tilde{\mathcal K}_0(t)=c\,e^{-(\frac{2s}{p-1}-\frac{n-2s}{2})t}\int_0^\pi \frac{(\sin \phi_1)^{n-2}}{(\cosh t-\cos \phi_1)^{\frac{n+2s}{2}}}\,d\phi_1,
\end{equation*}
where $c$ is a positive constant that only depends on $n$ and $s$.

As a consequence of Proposition \ref{prop:divV*}, one obtains the following Hamiltonian quantity for the equation \eqref{problem-vv} (Theorem 4.3 in \cite{acdfgw}),
\begin{equation*}\label{Hamiltonian}
\begin{split}
H^*(t)=&\frac{1}{{d}_{s}}
\left(-\frac{\beta}{2}v^2+\frac{1}{p+1}v^{p+1}\right)
+\frac{1}{2}\int_{0}^{\rho^*_0}  {(\rho^*)}^{1-2\gamma}\left\{-e^*_1(\partial_{\rho^*}V^*)^2
+e^*_2(\partial_t V^*)^2\right\}\,d\rho^*\\
=&:H_1(t)+H_2(t),
\end{split}
\end{equation*}
where $V^*=V^*(t,\rho^*)$ is the extension of $v=v(t)$. Moreover, this Hamiltonian is monotone in $t$. Indeed,
\begin{equation}\label{ham_der0}
\partial_t\left[ H^*(t)\right]=-2\int_{0}^{\rho^*_0}\left[(\rho^*)^{1-2s}e^*
\left(\tfrac{\rho}{\rho^*}\right)^{-2}\left(1+\tfrac{\rho^2}{4}\right)^{-2}
\left(-\tfrac{n-2s}{2}+\tfrac{2s}{p-1}\right)\left[\partial_t V^*\right]^2\right]\,d\rho^*,
\end{equation}
and the functions $e^*$, $e_1^*$ and $e^*_2$ are strictly positive.

\bigskip

\begin{proof}[Proof of Proposition \ref{prop:asymptotics}] Our aim is to show that
\begin{equation*}
\lim_{t\to -\infty}v(t)=\beta^{\frac{1}{p-1}}.
\end{equation*}
Equation \eqref{ham_der0} implies that
\begin{equation*}\label{ham_der}
\partial_t\left[ H^*(t)\right]
\geq 0,
\end{equation*}
since $\left(-\tfrac{n-2s}{2}+\tfrac{2s}{p-1}\right)<0$ due to the fact that $p>\frac{n+2s}{n-2s}$. Thus in this case the  Hamiltonian $H^*(t)$ is increasing in $t$. By Proposition  \ref{prop:existentire},  $v$ is uniformly bounded, hence so is $V^*$. By elliptic regularity,  $\partial_t V^*, \ \partial_{\rho^*}V^*$ are also bounded. Then $H^*$ is uniformly bounded. Integrating \eqref{ham_der0} on the real line $\R$, one has
\begin{equation*}
\begin{split}
\int_{-\infty}^\infty \partial_t\left[ H^*(t)\right]dt&=-2\int_{-\infty}^\infty\int_{0}^{\rho^*_0}\left[(\rho^*)^{1-2\gamma}e^*
\left(\tfrac{\rho}{\rho^*}\right)^{-2}\left(1+\tfrac{\rho^2}{4}\right)^{-2}
\left(-\tfrac{n-2s}{2}+\tfrac{2s}{p-1}\right)\left[\partial_t V^*\right]^2\right]\,d\rho^*dt\\
&=H^*(+\infty)-H^*(-\infty),
\end{split}
\end{equation*}
which is finite by the uniform boundedness of $H^*$. Thus we have that $\int_0^{\rho_0^*}g(\rho^*)|\partial_t V^*|^2\,d\rho^*\to 0 $ as $t\to \pm\infty$ for some non-negative function $g$, which implies that $\partial_t V^*\to 0$ as $t\to \pm \infty$.

For any sequence $\{t_i\}\to -\infty$, consider $V^*_i(t, \rho^*)=V^*(t+t_i, \rho^*)$. There exists a subsequence $V^*_i(t, \rho^*)\to V_\infty(t, \rho^*)$ in $\mathcal{C}\left([-1, 1]\times [0, \rho_0^*]\right)$. By the argument above,
we know that $\partial_t V_\infty(t, \rho^*)=0$, i.e. $V_\infty=V_\infty(\rho^*)$, and this is a solution of the same equation satisfied by $V^*(t, \rho^*)$. Since $V^*$ is the extension of $v(t)$,  one has $v(t)\to V_\infty(0)$ as $t\to -\infty$, where $V_\infty(0)$ is a constant.  Moreover, $V_\infty(0)$ is a solution of
\begin{equation*}
\tilde P_s^{(0)}(v)=v^p, \quad\text{in } \mathbb R.
\end{equation*}
Since $\displaystyle\lim_{t\to +\infty}H^*(t)=0$, $\partial_t H^*(t)\geq 0$ and is not identically zero. One then obtains that $V_\infty(\rho^*)$ is not trivial, so $V_\infty(0)$ is not zero.  From this and the expression of $\tilde P^{(0)}_s$ above (formula \eqref{convolution-K}), one can easily get that $V_\infty(0)=\beta^{\frac{1}{p-1}}$, as desired.
\end{proof}


\section{Linear theory}\label{section:linear}

From the last section, we know that there exists a solution $w$ of $(-\Delta)^s u=u^p $ in $\R^n $ that satisfies
\begin{equation}\label{w}
\left\{\begin{array}{l}
w(0)=1,\\
w(x)\sim\beta^{\frac{1}{p-1}}|x|^{-\frac{2s}{p-1}},\quad \text{as } |x|\to \infty.
\end{array}
\right.
\end{equation}
We are now interested in the linear problem:
\begin{equation}\label{model-linearization}
L\phi:=(-\Delta)^s\phi-\frac{\mathcal V(r)}{r^{2s}}\phi=h,
\end{equation}
where we have defined the (radial) potential
\begin{equation*}\label{potential}
\mathcal V:=\mathcal V(r)=r^{2s}pw^{p-1}.
\end{equation*}
 The asymptotic behavior of this potential is easily calculated using the asymptotics for $w$
 and, indeed, if we define $r=e^{-t}$,
\begin{equation}\label{asymptotics-potential}
\mathcal V(t)=\begin{cases}
O(e^{-2st}) &\mbox{ as }t\to +\infty,\\
p\beta +O(e^{q_1t} ) &\mbox{ as }t\to -\infty,
\end{cases}\end{equation}
for some $q_1>0$.

In the $s=1$ case, solutions to \eqref{model-linearization} are constructed directly by projecting $\phi$ and $h$ onto spherical harmonics and then solving the corresponding ODEs. This approach cannot be directly applied here. Instead, we use the conformal geometry tools developed in \cite{acdfgw}.

By the well known extension theorem for the fractional Laplacian  \cite{Caffarelli-Silvestre},  equation
\eqref{model-linearization} is also equivalent, for $s\in(0,1)$, to the boundary reaction problem
\begin{equation*}\label{eqlinear1}
\left\{\begin{array}{r@{}l@{}l}
\partial_{yy}\Phi+\dfrac{1-2s}{y}\partial_y \Phi+\Delta_{\R^n}\Phi
    &\,=0
    &\quad\mbox{in }\R^{n+1}_+,\medskip\\
- d_s\lim\limits_{y\to0}y^{1-2s}\partial_y \Phi
    &\,=\dfrac{\mathcal V(r)}{r^{2s}}\phi+h
    &\quad\mbox{on }\R^n,
\end{array}\right.
\end{equation*}
where we have denoted $\Phi|_{y=0}=\phi$ and the constant $d_s$ is given by \eqref{constant-extension}.

Using the spherical harmonic decomposition as above, we can write $\Phi=\sum_{m=0}^\infty \Phi_m(r,y)E_m(\theta)$, $\phi=\sum_{m=0}^\infty \phi_m(r)E_m(\theta)$, $h=\sum_{m=0}^\infty h_m(r)E_m(\theta)$, and $\Phi_m$ satisfies the following:
\begin{equation}\label{eqlinear2}
\left\{\begin{array}{r@{}l@{}l}
\partial_{yy}\Phi_{m}+\dfrac{1-2s}{y}\partial_y\Phi_{m}
+\Delta_{\R^n}\Phi_m-\dfrac{\mu_m}{r^2}\Phi_m
    &\,=0
    &\quad\mbox{in }\R^{n+1}_+,\medskip\\
- d_s\lim\limits_{y\to0}y^{1-2s}\partial_y \Phi_m
    &\,=\dfrac{\mathcal V(r)}{r^{2s}}\phi_m+h_m
    &\quad\mbox{on }\R^n.
\end{array}
\right.
\end{equation}

We will now use conformal geometry to rewrite equation \eqref{model-linearization}, as explained in the previous section. If we  define
\begin{equation}\label{shift}
\psi=r^{\frac{n-2s}{2}}\phi,
\end{equation}
then by the conformal property \eqref{conjugation1}, we have that
 equation \eqref{model-linearization} is equivalent to the problem
\begin{equation}\label{eq0}
\mathcal L \psi:=P_s^{g_0}(\psi)-\mathcal V\psi=r^{2s}h=:\tilde h.
\end{equation}
Projecting onto spherical harmonics we also have
\begin{equation}\label{eq0m}
\mathcal L_m \psi_m:=P_s^{(m)}(\psi_m)-\mathcal V\psi_m=\tilde h_m, \quad m=0,1,\ldots.
\end{equation}

\subsection{Indicial roots}\label{subsection:indicial-roots}

Let us calculate the indicial roots for the model linearized operator defined in \eqref{model-linearization} as $r\to 0$ and as $r\to \infty$.

One can see that the indicial roots here are similar to the ones in \cite{acdfgw} with the roles at $r=0$ and $\infty$ interchanged. However, recall that in \cite{acdfgw} a subcritical power is taken, $\frac{n}{n-2s}<p<\frac{n+2s}{n-2s}$, while here $p>\frac{n+2s}{n-2s}$ is supercritical. To handle this difference we need to use the result in \cite{lwz}, where the authors study the stability of the singular solution $w=\beta^{\frac{1}{p-1}}r^{-\frac{2s}{p-1}}$. In particular, they show the existence of a threshold dimension $n_0(s)$ such that in any higher dimension $n>n_0(s)$, there exists $p_2>\frac{n+2s}{n-2s}$ such that the singular solution is unstable if $\frac{n+2s}{n-2s}<p<p_2$, and is stable if $p\geq p_2$. For $n\leq n_0(s)$, however, the singular solution is unstable for all $p>\frac{n+2s}{n-2s}$. When $s=1$, this exponent $p_2$ corresponds to the well-known Joseph-Lundgren exponent \cite{gnw}.


Define now
\begin{equation*}
P_{JL}=\left\{\begin{array}{l}
p_2, \quad n>n_0(s),\\
\infty, \quad n\leq n_0(s).
\end{array}
\right.
\end{equation*}

Using the above result and following similar argument in the proof of Lemma 7.1 in \cite{acdfgw}, one has the following result:

\begin{lemma}\label{indicial}
For the operator $L$ we have that, for each fixed mode $m=0,1,\ldots,$
\begin{itemize}
\item[\emph{i.}] At $r=0$, there exist two sequences of indicial roots
\begin{equation*}
\Big\{\tilde \sigma_j^{(m)}\pm i\tilde\tau_j^{(m)}-\tfrac{n-2s}{2}\Big\}_{j=0}^\infty\quad \text{and} \quad\Big\{-\tilde\sigma_j^{(m)}\pm i\tilde\tau_j^{(m)}-\tfrac{n-2s}{2}\Big\}_{j=0}^\infty.
\end{equation*}
Moreover, at the $j=0$ level the indicial roots are real; more precisely, the numbers
\begin{equation*}
\tilde\gamma_m^{\pm}:=\pm\tilde\sigma_0^{(m)}-\tfrac{n-2s}{2}=-\tfrac{n-2s}{2}\pm \left[1-s+\sqrt{(\tfrac{n-2}{2})^2+\mu_m}\right],\quad m=0,1,\ldots,
\end{equation*}
form an increasing sequence in $m$ (except for multiplicity repetitions).

\item[\emph{ii.}] At $r=\infty$,  there exist two sequences of indicial roots
\begin{equation*}
\Big\{\sigma_j^{(m)}\pm i\tau_j^{(m)}-\tfrac{n-2s}{2}\Big\}_{j=0}^\infty\quad \text{and} \quad\Big\{-\sigma_j^{(m)}\pm i\tau_j^{(m)}-\tfrac{n-2s}{2}\Big\}_{j=0}^\infty.
\end{equation*}
Moreover,
\begin{itemize}
\item[\emph{a)}] For the mode $m=0$, if $\frac{n+2s}{n-2s}<p<P_{JL}$ (the \underline{unstable} case), then the indicial roots $\gamma_0^\pm$ are a pair of complex conjugates with real part $-\frac{n-2s}{2}$ and imaginary part $\pm\tau_0^{(0)}$;
while if $p\geq P_{JL}$ (the \underline{stable} case), the indicial roots $\gamma_0^{\pm}:=\pm\sigma_0^{(0)}-\frac{n-2s}{2}$ are real with
\begin{equation*}\label{gamma0}
-(n-2s)+\tfrac{2s}{p-1}<\gamma_0^-<-\tfrac{n-2s}{2}<\gamma_0^+<-\tfrac{2s}{p-1},
\end{equation*}

\item[\emph{b)}] In addition, for all $j\geq 1$,
\begin{equation*}
\sigma_j^{(0)}>\tfrac{n-2s}{2}.
\end{equation*}

\item[\emph{c)}] 
For the mode $m=1$, the indicial roots
$\gamma_1^{\pm}:=\pm \sigma_0^{(1)}-\tfrac{n-2s}{2}$ take the values
$$-\tfrac{2s}{p-1}-1 \quad\mbox{ and }\quad -(n-2s)+\tfrac{2s}{p-1}+1.$$
\end{itemize}
\end{itemize}
\end{lemma}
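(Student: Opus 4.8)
The plan is to run the Fourier/conformal-geometry computation of \cite[Lemma~7.1]{acdfgw} on the cylinder $\R\times\mathbb S^{n-1}$, with the roles of the ends $r\to0$ and $r\to\infty$ interchanged relative to that reference, and then to import the stability analysis of \cite{lwz} to settle the part of the statement that is new in the supercritical range. First I would pass to the cylinder via \eqref{shift}--\eqref{eq0m}, reducing matters to the indicial analysis of $\mathcal L_m=P_s^{(m)}-\mathcal V$ acting on $\psi_m(t)$, $t=-\log r$, where $P_s^{(m)}$ is the convolution operator on $\R_t$ whose explicit meromorphic symbol is recorded in \cite[Sections~3--4]{acdfgw}; equivalently, in the $\phi$ variable one uses $\Ds\big(r^{-\mu}E_m(\tfrac{x}{|x|})\big)=\Lambda^{(m)}_{n,s}(\mu)\,r^{-\mu-2s}E_m(\tfrac{x}{|x|})$ for a meromorphic characteristic function $\Lambda^{(m)}_{n,s}$, which for $m=0$ is the classical
\[
\Lambda_{n,s}(\mu)=2^{2s}\,\frac{\Gamma\!\big(\tfrac{\mu+2s}{2}\big)\,\Gamma\!\big(\tfrac{n-\mu}{2}\big)}{\Gamma\!\big(\tfrac{\mu}{2}\big)\,\Gamma\!\big(\tfrac{n-2s-\mu}{2}\big)} .
\]
By the asymptotics \eqref{asymptotics-potential}, $\mathcal V\to0$ as $r\to0$ and $\mathcal V\to p\beta$ as $r\to\infty$, with exponentially small errors, so the relevant model operator is $\Ds$ near $r=0$ and $\Ds-p\beta\,r^{-2s}$ near $r=\infty$ (the latter being exactly the linearization about the singular solution $w_1=\beta^{1/(p-1)}r^{-2s/(p-1)}$), and the $O(e^{q_1t})$ remainder is lower order and does not displace the indicial roots. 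The indicial roots are then the exponents $\delta=-\mu$ for which $r^{\delta}E_m$ solves the model equation, i.e. the roots of $\Lambda^{(m)}_{n,s}(\mu)=0$ at $r=0$ and of $\Lambda^{(m)}_{n,s}(\mu)=p\beta$ at $r=\infty$; expressing them in the form $\pm\sigma^{(m)}_j\pm i\tau^{(m)}_j-\tfrac{n-2s}{2}$ is the shift induced by \eqref{shift}, and the pairing $\gamma^++\gamma^-=-(n-2s)$ comes from the symmetry $\sigma\mapsto-\sigma$ of the symbol.

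For item~(i) one simply reads off the zeros of $\Lambda^{(m)}_{n,s}$, which are the poles of its two denominator Gamma factors: at the $j=0$ level they are the classical exponents $\delta=m$ (the harmonic-type solution $r^mE_m$) and $\delta=2s-n-m$ (the fundamental-solution-type), that is $\tilde\gamma^\pm_m=\pm\tilde\sigma^{(m)}_0-\tfrac{n-2s}{2}$ with $\tilde\sigma^{(m)}_0=1-s+\sqrt{(\tfrac{n-2}{2})^2+\mu_m}$ (using $\mu_m=m(m+n-2)$), and the further pairs $(j\ge1)$ come from the higher Gamma poles; the monotonicity in $m$ at $j=0$ is immediate from the formula since $m\mapsto\mu_m$ is increasing.

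For the end $r=\infty$ the central identity is that $w_1$ solves \eqref{eq:LE}, which forces $\Lambda_{n,s}(\tfrac{2s}{p-1})=\beta$; moreover the supercriticality $p>\tfrac{n+2s}{n-2s}$ is precisely $\tfrac{2s}{p-1}<\tfrac{n-2s}{2}$, and $\Lambda_{n,s}$ is symmetric about $\tfrac{n-2s}{2}$, strictly increasing on $(0,\tfrac{n-2s}{2})$, with maximum the sharp fractional Hardy constant $\bar\Lambda_{n,s}=2^{2s}\Gamma(\tfrac{n+2s}{4})^2\Gamma(\tfrac{n-2s}{4})^{-2}$. For $m=0$ the indicial roots solve $\Lambda_{n,s}(\mu)=p\beta$; since $p\beta>\beta=\Lambda_{n,s}(\tfrac{2s}{p-1})$, if $p\beta\le\bar\Lambda_{n,s}$ there are two real roots with $\tfrac{2s}{p-1}<\mu_1<\tfrac{n-2s}{2}<\mu_2=n-2s-\mu_1<n-2s-\tfrac{2s}{p-1}$, so that $\gamma^+_0=-\mu_1$ and $\gamma^-_0=-\mu_2$ occupy the stated positions, whereas if $p\beta>\bar\Lambda_{n,s}$ the two roots form a complex conjugate pair $\mu=\tfrac{n-2s}{2}\pm i\tau^{(0)}_0$, i.e. $\gamma^\pm_0$ have real part $-\tfrac{n-2s}{2}$. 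It then remains to identify $p\beta\lessgtr\bar\Lambda_{n,s}$ with $p\gtrless P_{JL}$, and this is exactly the content of \cite{lwz}: $\Ds-p\beta\,r^{-2s}\ge0$ in the quadratic-form sense is equivalent, by the sharp fractional Hardy inequality, to $p\beta\le\bar\Lambda_{n,s}$, and \cite{lwz} shows that this inequality is precisely the stability threshold $p\ge P_{JL}$ of the singular solution $w_1$ (with $P_{JL}=\infty$, i.e. only the unstable alternative, when $n\le n_0(s)$), which gives~(ii.a). For~(ii.b), the non-principal roots $(j\ge1)$ are, as in \cite[Lemma~7.1]{acdfgw}, controlled by the higher Gamma poles of the symbol and therefore satisfy $\sigma^{(0)}_j>\tfrac{n-2s}{2}$. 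For~(ii.c), differentiating the translation-invariant equation $\Ds w=w^p$ about $w_1$ gives $\Ds(\partial_iw_1)=p\beta\,r^{-2s}\partial_iw_1$, and $\partial_iw_1\sim x_i|x|^{-\frac{2s}{p-1}-2}$ is a mode-$1$ solution of exponent $\delta=-\tfrac{2s}{p-1}-1$; the pairing $\gamma^+_1+\gamma^-_1=-(n-2s)$ then forces the second value $-(n-2s)+\tfrac{2s}{p-1}+1$ (equivalently, one evaluates the explicit mode-$1$ symbol directly).

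The step I expect to be the main obstacle is~(ii.a): the real-versus-complex dichotomy for $\gamma^\pm_0$ is equivalent to the sign of $p\beta-\bar\Lambda_{n,s}$, which is not elementary and is exactly where the Joseph--Lundgren-type analysis of \cite{lwz} must be invoked and then translated, through the sharp Hardy inequality, into a statement about the location of the indicial roots; all the remaining cases reduce to Gamma-function bookkeeping already carried out in \cite{acdfgw}, the only additional point being the (routine) observation that replacing the true operator $L$ by its model $\Ds-p\beta\,r^{-2s}$ near $r=\infty$ does not change the indicial roots because $\mathcal V-p\beta=O(e^{q_1t})$.
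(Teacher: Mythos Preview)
Your proposal is correct and follows essentially the same approach as the paper, which does not give a self-contained proof but simply invokes \cite[Lemma~7.1]{acdfgw} with the roles of $r\to0$ and $r\to\infty$ interchanged, together with the stability result of \cite{lwz} to settle the real-versus-complex dichotomy in~(ii.a). Your write-up is in fact more detailed than the paper's treatment: the explicit identification of $\tilde\gamma_m^\pm$ via the Gamma poles, the reduction of~(ii.a) to the inequality $p\beta\lessgtr\bar\Lambda_{n,s}$ through the sharp fractional Hardy constant, and the derivation of~(ii.c) by differentiating the equation for the singular solution $w_1$ are exactly the computations one would carry out when unwinding the references.
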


\subsection{Conformal geometry and invertibility for a Hardy-type operator}

First we recall the results in \cite{acdfgw}, adapted to our setting, in order to study the invertibility of a Hardy type operator with fractional Laplacian:
\begin{equation*}
\mathcal L_\kappa \psi:=P_s^{g_0}(\psi)-\kappa\psi=r^{2s}h=:\tilde h,
\end{equation*}
where $\kappa$ is any fixed real constant. Projecting onto spherical harmonics this equation is equivalent to
\begin{equation}\label{eq-kappa}
\mathcal L_{\kappa,m} \psi_m:=P_s^{(m)}(\psi_m)-\kappa\psi_m=\tilde h_m, \quad m=0,1,\ldots.
\end{equation}

We will consider values of $\kappa$ for which the indicial roots of $\mathcal L_{\kappa,m}$ are those of Lemma \ref{indicial}, except for a shift of $\frac{n-2s}{2}$ due to the change \eqref{shift}. Based on the results in \cite{acdfgw} we have, in both the stable and the unstable cases of Lemma \ref{indicial}, 
the following

\begin{teo}\label{thm:Hardy-potential}
Fix $m=0,1,\ldots$, and assume  that the right hand side $\tilde h_m$ in \eqref{eq-kappa} satisfies
\begin{equation*}\label{decay-h}
h_m(t)=\begin{cases}
O(e^{\delta t}) &\text{as } t\to-\infty,\\
O(e^{-\delta_0 t}) &\text{as } t\to+\infty,
\end{cases}
\end{equation*}
for some real constants $\delta,\delta_0$.
\begin{itemize}
 \item[\emph{i.}] Assume that $\delta+\delta_0\geq 0$. Then a particular solution to \eqref{eq-kappa} is
    \begin{equation*}\label{solucion2}
\psi_m(t)=\int_{\mathbb R}\tilde h_m(t') {{\mathcal G}}_m(t-t')\,dt',
\end{equation*}
where $\mathcal G$ is an even $\mathcal C^\infty$ function outside the origin. The exact formula for $\mathcal G_m$  depends on the value of $\delta$ with respect to the indicial roots in Lemma \ref{indicial}.  In any case,
\begin{equation}\label{decay-phi}
\psi_m(t)=O(e^{\delta t}) \quad\text{as} \quad t\to -\infty, \quad \psi_m(t)=O(e^{-\delta_0 t}) \quad\text{as} \quad t\to +\infty.
\end{equation}

\item[\emph{ii.}] All solutions of the homogeneous problem $\mathcal L_{\kappa,m} \psi_m=0$ are linear combinations of $e^{\sigma_j\pm i\tau_j}$ and $e^{-\sigma_j\pm i\tau_j}$, where $\sigma_j\pm i\tau_j$, $-\sigma_j\pm i\tau_j$, $j=0,1,\ldots$, are the indicial roots for the problem. Thus the only solution to \eqref{eq-kappa} with decay as in \eqref{decay-phi} is precisely $\psi_m$.
\end{itemize}
\end{teo}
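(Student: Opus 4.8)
The plan is to reduce the solvability of \eqref{eq-kappa} to a one-dimensional convolution problem on the cylinder, following the strategy of Sections 3--4 of \cite{acdfgw}. After the conjugation \eqref{conjugation1}, the operator $P_s^{g_0}$ on $\mathbb R\times\mathbb S^{n-1}$ projects onto each spherical harmonic to give a convolution operator $P_s^{(m)}$ acting on functions of $t\in\mathbb R$, with a kernel $\mathcal K_m$ that decays exponentially at $\pm\infty$; hence $\mathcal L_{\kappa,m}=P_s^{(m)}-\kappa$ is itself, up to the multiplication by $\kappa$, a convolution operator. The natural way to invert it is via the Fourier transform in $t$: one computes the symbol $\Theta_m(\xi)=\widehat{\mathcal K_m}(\xi)-\kappa$, which is an explicit meromorphic function (built from ratios of Gamma functions, as in the formula for $\tilde P_s^{(0)}$ and the known symbol of the conformal fractional Laplacian on the cylinder), and the Green's function is $\mathcal G_m(t)=\frac{1}{2\pi}\int_{\mathbb R}\Theta_m(\xi)^{-1}e^{i\xi t}\,d\xi$. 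The zeros of $\Theta_m$ on the real line are exactly the (shifted) indicial roots of Lemma \ref{indicial}; since those roots are real only at the $j=0$ level and otherwise come in complex conjugate pairs with nonzero imaginary part, one shifts the contour of integration into the complex plane, picking up residues, to land on a contour where $\Theta_m$ has no zeros and the integral converges. The precise placement of the contour relative to the roots $\pm\sigma_j\pm i\tau_j$ is dictated by the prescribed decay rates $\delta$ at $-\infty$ and $-\delta_0$ at $+\infty$, which is why the formula for $\mathcal G_m$ ``depends on the value of $\delta$ with respect to the indicial roots.''

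For part (i), the key steps are: (1) verify that $\mathcal L_{\kappa,m}$ is a convolution operator with the stated symbol and identify its zero set with the indicial roots; (2) under the hypothesis $\delta+\delta_0\geq0$, choose a horizontal strip $\{\Imag\xi\in(a,b)\}$ in the Fourier variable, free of zeros of $\Theta_m$, whose boundary lines correspond to the weights $e^{\delta t}$ and $e^{-\delta_0 t}$; the condition $\delta+\delta_0\geq0$ is precisely what guarantees such a strip exists and that it sits between consecutive indicial roots (this uses the ordering and the reality/non-reality pattern of the roots from Lemma \ref{indicial}); (3) define $\mathcal G_m$ as the inverse Fourier transform of $\Theta_m^{-1}$ along a contour in that strip, check it is smooth away from $t=0$ and decays at the appropriate exponential rates by a residue/contour-shift estimate; (4) convolve with $\tilde h_m$ and verify, using Young-type estimates together with the decay of $\tilde h_m$, that $\psi_m=\tilde h_m\ast\mathcal G_m$ solves \eqref{eq-kappa} and obeys \eqref{decay-phi}. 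For part (ii), since $\mathcal L_{\kappa,m}$ is a convolution operator whose symbol is meromorphic, any homogeneous solution must be a combination of exponentials $e^{\lambda t}$ with $\Theta_m(\lambda/i)=0$, i.e. $\lambda$ a (shifted) indicial root; one verifies directly that $P_s^{(m)}$ applied to such an exponential reproduces it times $\Theta_m$ evaluated at the root, hence these exponentials span the kernel. None of the homogeneous solutions $e^{\pm\sigma_j t}e^{\pm i\tau_j t}$ decays like \eqref{decay-phi} at both ends simultaneously (again by the location of the roots relative to the chosen strip), so uniqueness follows.

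I would also record, as in \cite{acdfgw}, the alternative route through the extension problem of Proposition \ref{prop:divV*}: one solves the degenerate-elliptic extension PDE on $X^{n+1}$ with the Hardy-type zeroth-order term, reads off the Neumann data, and the associated Hamiltonian/monotonicity identity \eqref{ham_der0} controls the asymptotics of $\psi_m$ in terms of the indicial roots. This gives an a priori characterization of the possible growth rates of solutions at $t\to\pm\infty$ that matches the list $\{\pm\sigma_j\pm i\tau_j\}$, and in particular confirms that the indicial roots computed in Lemma \ref{indicial} are the only admissible exponents, which is what pins down both the formula for $\mathcal G_m$ and the uniqueness statement. In the stable case $p\geq P_{JL}$ the mode $m=0$ behaves slightly differently because $\gamma_0^\pm$ are then real; but the inequalities recorded in Lemma \ref{indicial}(ii)(a) place them strictly between $-(n-2s)+\frac{2s}{p-1}$ and $-\frac{2s}{p-1}$ and away from $-\frac{n-2s}{2}$, so the same contour argument applies with the strip chosen between $\gamma_0^-$ and $\gamma_0^+$.

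The main obstacle I expect is step (2)--(3): producing the Green's function $\mathcal G_m$ with the sharp decay \eqref{decay-phi} requires careful bookkeeping of which indicial roots lie on either side of the contour as $\delta$ varies, and a uniform control of $\Theta_m^{-1}$ at $\Real\xi\to\pm\infty$ (growth of the Gamma-function symbol) to justify the contour shifts and the convergence of the inverse Fourier integral. Establishing that $\Theta_m$ has no \emph{other} zeros in the relevant strip — i.e. that the only obstructions to inversion are the explicitly listed indicial roots — is the delicate point, and it is exactly here that the stability/instability dichotomy of \cite{lwz} and the resulting definition of $P_{JL}$ enter: the sign of the discriminant determining whether $\gamma_0^{(0)\pm}$ are real or complex is what changes the topology of the zero set, and one must treat the two cases in Lemma \ref{indicial}(ii)(a) separately when choosing the contour.
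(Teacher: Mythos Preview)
Your proposal is correct and follows precisely the Fourier-analytic strategy of \cite{acdfgw} that the paper invokes: the paper does not give an independent proof of this theorem but simply states it as an adaptation of the results in \cite{acdfgw} to the present values of $\kappa$ and the indicial-root configuration of Lemma~\ref{indicial}. Your outline---symbol computation for the convolution operator $P_s^{(m)}-\kappa$, identification of its zeros with the indicial roots, contour shifting to build $\mathcal G_m$ with the prescribed exponential decay, and the exponential characterization of the kernel---is exactly the content of Sections~5--6 of \cite{acdfgw}, so there is nothing to add.
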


\subsection{Study of the linear problem in weighted spaces}

In this subsection we come to study the linear problem \eqref{model-linearization}.  For this, we will work in weighted $L^\infty$ spaces in which the weight is chosen differently in a bounded set and near infinity. Define the norms
\begin{equation}\label{norms}
\begin{split}
\|\phi\|_*&=\sup_{\{|x|\leq 1\}}|x|^\sigma |\phi(x)|+\sup_{\{|x|\geq 1\}}|x|^{\frac{2s}{p-1}}|\phi(x)|,\\
\|h\|_{**}&=\sup_{\{|x|\leq 1\}}|x|^{\sigma+2s} |h(x)|+\sup_{\{|x|\geq 1\}}|x|^{\frac{2s}{p-1}+2s}|h(x)|,
\end{split}
\end{equation}
where $\sigma\in (0, n-2s)$ is a constant to be determined later.

Our aim  is to get the following solvability result:

\begin{proposition}\label{invertibility}
Let $h$ satisfy $\|h\|_{**}<+\infty$. For linear problem \eqref{model-linearization},  we have:
\begin{itemize}
\item[\emph{i}.]
if $p>\frac{n+2s-1}{n-2s-1}$, then there exists a solution $\phi$ and it satisfies
\begin{equation}\label{bound}
\|\phi\|_*\leq C\|h\|_{**}
\end{equation}
for some $C>0$;

\item[\emph{ii.}]
if $\frac{n+2s}{n-2s}<p<\frac{n+2s-1}{n-2s-1}$ and, in addition,
\begin{equation*}
\int_{\R^n}h\frac{\partial w}{\partial x_i}dx=0, \ i=1,\cdots, n,
\end{equation*}
then there exists a solution $\phi$ and it satisfies
\begin{equation*}
\|\phi\|_*\leq C\|h\|_{**}.
\end{equation*}
\end{itemize}
\end{proposition}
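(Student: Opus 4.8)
The plan is to reduce the full problem \eqref{model-linearization} to the mode-by-mode analysis of the conformally conjugated equation \eqref{eq0m}, and then apply Theorem \ref{thm:Hardy-potential} after absorbing the potential $\mathcal V$ into a perturbation of the Hardy operator $\mathcal L_\kappa$ with $\kappa=p\beta$. Concretely, write $\psi=r^{\frac{n-2s}{2}}\phi$ and $\tilde h=r^{2s}h$, so that the hypothesis $\|h\|_{**}<\infty$ translates into the decay $\tilde h_m(t)=O(e^{\delta t})$ as $t\to-\infty$ and $O(e^{-\delta_0 t})$ as $t\to+\infty$ in the cylindrical variable $t=-\log r$, where the weights in \eqref{norms} dictate $\delta$ and $\delta_0$ (roughly $\delta$ corresponding to $\sigma+\frac{n-2s}{2}$ at $r=0$, i.e. $t\to+\infty$ after relabeling, and $\delta_0$ to $\frac{2s}{p-1}-\frac{n-2s}{2}$ at infinity). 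Since by \eqref{asymptotics-potential} the difference $\mathcal V-p\beta$ decays like $O(e^{q_1 t})$ as $t\to-\infty$ and like $O(e^{-2st})$ as $t\to+\infty$, one can set up the equation $\mathcal L_{p\beta,m}\psi_m=\tilde h_m+(\mathcal V-p\beta)\psi_m$ and solve it by a fixed point / Fredholm argument using the Green's function $\mathcal G_m$ from Theorem \ref{thm:Hardy-potential}; the smallness of the perturbation at both ends of the cylinder is exactly what makes the contraction (or compactness) work in the weighted space.

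The second step is to check, mode by mode, that the target decay rates of $\psi_m$ lie strictly between consecutive indicial roots of $\mathcal L_{p\beta,m}$ as listed in Lemma \ref{indicial}, so that part (i) of Theorem \ref{thm:Hardy-potential} applies and part (ii) pins down uniqueness within that decay class. For $m=0$: near $r=0$ the relevant indicial roots are the real ones $\tilde\gamma_0^\pm$, and choosing $\sigma\in(0,n-2s)$ appropriately places $-\sigma$ (the weight after the shift) strictly between $\tilde\gamma_0^-$ and $\tilde\gamma_0^+$; near $r=\infty$ the weight $-\frac{2s}{p-1}$ must sit above $\gamma_0^+$, which in the stable case is guaranteed by the inequality $\gamma_0^+<-\frac{2s}{p-1}$ recorded in Lemma \ref{indicial}(ii.a), and in the unstable case is automatic since $\gamma_0^\pm$ have real part $-\frac{n-2s}{2}<-\frac{2s}{p-1}$ (using $p>\frac{n+2s}{n-2s}$). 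For $m\ge 2$ the indicial roots at $r=0$ move strictly up with $m$ (Lemma \ref{indicial}(i)) while at infinity they also drift away from the target rate, so solvability is unobstructed and one sums the modes using that the weighted norms control the spherical-harmonic series (this requires the usual elliptic estimates to upgrade mode-wise bounds to a bound on $\phi$ itself, together with decay of $h_m$ in $m$ coming from $\|h\|_{**}<\infty$ and Schauder/$L^\infty$ estimates).

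The delicate mode is $m=1$, and this is the source of the dichotomy between (i) and (ii). By Lemma \ref{indicial}(ii.c) the indicial roots at $r=\infty$ for $m=1$ are exactly $-\frac{2s}{p-1}-1$ and $-(n-2s)+\frac{2s}{p-1}+1$; the function $\frac{\partial w}{\partial x_i}$ is a homogeneous-degree-one object built from $w$, so after the conjugation it behaves like $e^{(-\frac{2s}{p-1}-1+\frac{n-2s}{2})t}$, i.e. it is precisely a homogeneous solution of the mode-$1$ homogeneous problem $\mathcal L_1\psi_1=0$ decaying at the borderline rate. When $p>\frac{n+2s-1}{n-2s-1}$ one checks that $-\frac{2s}{p-1}-1<-\frac{2s}{p-1}$ and moreover the target weight $-\frac{2s}{p-1}$ for $\psi_1$ at infinity lies strictly above this indicial root, so Theorem \ref{thm:Hardy-potential}(i) applies directly to $m=1$ with no condition — giving part (i). When $\frac{n+2s}{n-2s}<p<\frac{n+2s-1}{n-2s-1}$, the target rate no longer clears the indicial root $-\frac{2s}{p-1}-1$, the mode-$1$ problem acquires a kernel/cokernel spanned by the translations $\partial_{x_i}w$, and solvability in the desired weighted class holds if and only if $\tilde h_1$ is $L^2$-orthogonal to these, which is exactly the stated condition $\int_{\R^n}h\,\partial_{x_i}w\,dx=0$; this gives part (ii). I expect the main obstacle to be the careful bookkeeping at $m=1$ — matching the borderline indicial root to the translation kernel, verifying that $\partial_{x_i}w$ really is the obstruction (which uses the precise asymptotics \eqref{w} from Proposition \ref{prop:asymptotics}, differentiated), and ensuring the perturbation argument with $\mathcal V-p\beta$ does not destroy the Fredholm alternative at that mode.
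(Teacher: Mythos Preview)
Your overall strategy---conformal conjugation, mode-by-mode reduction, and invoking Theorem \ref{thm:Hardy-potential}---is broadly aligned with the paper, but the argument for the dichotomy at $m=1$ contains an error, and the architecture differs from what the paper actually does.

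The error: you claim that for $p<\frac{n+2s-1}{n-2s-1}$ the target weight $-\frac{2s}{p-1}$ ``no longer clears the indicial root $-\frac{2s}{p-1}-1$'', but $-\frac{2s}{p-1}>-\frac{2s}{p-1}-1$ for every $p$, so this inequality carries no information. The indicial root that is actually crossed at $p=\frac{n+2s-1}{n-2s-1}$ is the \emph{other} one from Lemma \ref{indicial}(ii.c), namely $-(n-2s)+\frac{2s}{p-1}+1$: one has $-\frac{2s}{p-1}>-(n-2s)+\frac{2s}{p-1}+1$ precisely when $p>\frac{n+2s-1}{n-2s-1}$. The paper, however, does not detect the transition by tracking indicial-root crossings for $\mathcal L$ itself; it uses the dual formulation. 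The adjoint $\mathcal L^*=\mathcal L$ acts on $L^2_{\delta,\vartheta}$, and the mode-$1$ kernel element $w'$ (which decays like $r^{-\frac{2s}{p-1}-1}$) belongs to $L^2_{\delta,\vartheta}$, with $\vartheta$ chosen compatibly with the $\|\cdot\|_*$ norm as in \eqref{parameters}, if and only if $p<\frac{n+2s-1}{n-2s-1}$. The Fredholm alternative $\rango(\mathcal L)=\Ker(\mathcal L^*)^\perp$ then produces the orthogonality condition in case \emph{ii}.

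On the architecture: the paper does not run a contraction with the perturbation $(\mathcal V-p\beta)\psi_m$---that term is $O(1)$ on compact $t$-sets, so smallness ``at the ends'' alone does not give a contraction in the weighted space. Instead, the paper first invokes the weighted-$L^2$ Fredholm theory for $\mathcal L$ (from Section 8 of \cite{acdfgw}) together with the non-degeneracy Lemma \ref{non-degeneracy} to obtain existence and the bound $\|\phi\|_{L^2_{\delta,\vartheta}}\le C\|h\|_{L^2_{\delta-2s,\vartheta-2s}}$. It then upgrades this to the weighted $L^\infty$ estimate \eqref{bound} in a separate step: the Green's representation $\phi=-\int G(\cdot,y)\big(pw^{p-1}\phi+h\big)\,dy$ is split into dyadic annuli to control $\phi$ on compact sets in terms of $\|h\|_{**}$ and the $L^2$ norm of $\phi$, and finally Theorem \ref{thm:Hardy-potential} is applied mode-by-mode with the explicit decay \eqref{asymptotics-potential} of $\mathcal V-p\beta$ to pin down the behavior as $|x|\to\infty$. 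Your sketch elides this $L^2\to L^\infty$ passage entirely, and that step---together with the duality computation for $w'$---is where most of the substance of the paper's proof lies.
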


\begin{remark}
It is known that the linear operator $L$ has $n+1$ kernels corresponding to scaling (the mode zero kernel) and translation (the mode one kernels), i.e.
\begin{equation*}
\begin{split}
z_0(x)&=\frac{\partial }{\partial \lambda}\Big( \lambda^{\frac{2s}{p-1}}w(\lambda x)\Big)\Big|_{\lambda=1}=rw'(r)+\frac{2s}{p-1}w, \\
z_i(x)&=\frac{\partial }{\partial x_i}w(x),\quad i=1,\ldots,n.
\end{split}
\end{equation*}
These constitute an obstruction for the solvability of \eqref{model-linearization} and need to be taken into account in the arguments below.
\end{remark}

In the following, we will always assume  that
\begin{equation*}
\sigma \in \big(0, |\Real(\gamma_0^+)|\big)\subset \Big(0, \frac{n-2s}{2}\Big).
\end{equation*}
Additional conditions will be given in the proofs below.

\medskip

We start with a non-degeneracy result:

\begin{lemma}\label{non-degeneracy}
If $\phi$ is a solution of
\begin{equation*}
(-\Delta)^s \phi-pw^{p-1}\phi=0 \quad\mbox{in }\R^n
\end{equation*}
satisfying  $\|\phi\|_*<\infty$, then
\begin{equation*}
\phi=\sum_{i=1}^n c_i\frac{\partial w}{\partial x_i}
\end{equation*}
for some $c_i\in \R$.
\end{lemma}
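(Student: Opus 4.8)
\textbf{Proof plan for Lemma \ref{non-degeneracy} (non-degeneracy).}

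The plan is to reduce the problem to the spherical-harmonic projections of the linearized operator and then use the invertibility theory of Theorem \ref{thm:Hardy-potential} together with the indicial-root information of Lemma \ref{indicial}. First I would decompose $\phi = \sum_{m\geq 0}\phi_m(r)E_m(\theta)$, so that $\psi_m = r^{\frac{n-2s}{2}}\phi_m$ solves the homogeneous equation $\mathcal L_m\psi_m = 0$ on the cylinder, i.e. $P_s^{(m)}(\psi_m) - \mathcal V\psi_m = 0$. The assumption $\|\phi\|_*<\infty$ translates into decay of $\psi_m$: with $r = e^{-t}$, we get $\psi_m(t) = O(e^{-(\sigma-\frac{n-2s}{2})t})$ as $t\to+\infty$ (i.e. $r\to 0$) and $\psi_m(t) = O(e^{(\frac{2s}{p-1}-\frac{n-2s}{2})t})$ as $t\to-\infty$ (i.e. $r\to\infty$). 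The strategy is then to show that for each mode $m$ this prescribed decay is incompatible with being a nonzero solution of the homogeneous ODE, \emph{except} when $m=1$, where the kernel elements $\partial_{x_i}w$ live; and for $m=0$ the known kernel element $z_0 = rw' + \frac{2s}{p-1}w$ must be excluded using that $\sigma > 0$ places us strictly inside an interval where no homogeneous solution with that decay exists.

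Concretely, for each $m$ the homogeneous solutions are spanned by the exponentials $e^{(\pm\sigma_j^{(m)}\pm i\tau_j^{(m)} - \frac{n-2s}{2})t}$ at infinity (indicial roots from Lemma \ref{indicial}(ii)) and the analogous family with tildes at the origin (Lemma \ref{indicial}(i)). A genuine homogeneous solution with a two-sided decay rate exists only if the prescribed decay exponents at $t\to\pm\infty$ are ``caught between'' compatible indicial roots; by the uniqueness part, Theorem \ref{thm:Hardy-potential}(ii), the homogeneous solution is forced to vanish unless the window of allowed exponents actually contains such a root. I would check:
\begin{itemize}
\item[$m=0$:] The decay at $r\to 0$ is governed by $\sigma$ with $0<\sigma<|\Real(\gamma_0^+)|$, which by construction of $\gamma_0^\pm$ (real with $\gamma_0^+<-\frac{2s}{p-1}$ in the stable case, or complex with real part $-\frac{n-2s}{2}$ in the unstable case) rules out the scaling kernel $z_0$: its behavior at $r\to 0$ corresponds to $\phi_0\sim \text{const}$, i.e. $\sigma = 0$, which is excluded. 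So $\phi_0 \equiv 0$.
\item[$m=1$:] Here the indicial roots are exactly $\gamma_1^\pm = -\frac{2s}{p-1}-1$ and $-(n-2s)+\frac{2s}{p-1}+1$ (Lemma \ref{indicial}(ii)(c)). The functions $\partial_{x_i}w$ decay like $|x|^{-\frac{2s}{p-1}-1}$ at infinity, matching $\gamma_1^-$, and are smooth (bounded) at the origin; one checks these are precisely the solutions with $\|\cdot\|_*$ finite, so $\phi_1 = \sum_i c_i \partial_{x_i}w$.
\item[$m\geq 2$:] By the monotonicity of the indicial roots in $m$ (Lemma \ref{indicial}(i), last sentence) the admissible window shrinks/shifts so that no homogeneous solution decays at the required rate at both ends; hence $\phi_m\equiv 0$.
\end{itemize}

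\textbf{Main obstacle.} The delicate point is the mode $m=0$ (and to a lesser extent $m=1$): one must verify carefully that the decay rates forced by $\|\phi\|_*<\infty$, after the shift $\psi = r^{\frac{n-2s}{2}}\phi$, fall \emph{strictly between} the relevant indicial roots so that Theorem \ref{thm:Hardy-potential}(ii) gives $\psi_m\equiv 0$, and in particular that the known kernel $z_0$ is genuinely excluded by the strict inequality $\sigma>0$ while $z_i$ is genuinely included for $m=1$. This requires matching the three different asymptotic regimes — the potential asymptotics \eqref{asymptotics-potential}, the indicial roots at $r=0$ and $r=\infty$, and the weights in \eqref{norms} — and checking the inequality $p>\frac{n+2s}{n-2s}$ is exactly what makes $-\frac{2s}{p-1}-1 > -(n-2s)+\frac{2s}{p-1}+1$ fail or hold in the right direction. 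Handling the unstable case, where $\gamma_0^\pm$ are complex, needs the observation that complex indicial roots with real part $-\frac{n-2s}{2}$ still produce no admissible decaying solution once $\sigma>0$ is imposed, so the argument goes through uniformly in both cases.
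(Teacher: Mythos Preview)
Your overall architecture is right---project onto spherical harmonics and analyze each mode---but there are two genuine gaps.

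\textbf{Mode $m=0$.} You invoke Theorem \ref{thm:Hardy-potential}(ii) as if it classified all homogeneous solutions of $\mathcal L_0\psi_0=P_s^{(0)}\psi_0-\mathcal V\psi_0=0$. It does not: that theorem is about the \emph{constant-coefficient} operator $\mathcal L_{\kappa,0}=P_s^{(0)}-\kappa$, whereas $\mathcal V$ is only asymptotically constant. Showing that the explicit element $z_0$ has the wrong behaviour at the origin does not exclude other solutions of the variable-coefficient problem. The paper handles this by rewriting $\mathcal L_{p\beta,0}\psi_0=(\mathcal V-p\beta)\psi_0$ and using Theorem \ref{thm:Hardy-potential}(i) iteratively: because $\mathcal V-p\beta=O(e^{q_1 t})$ as $t\to-\infty$, each pass improves the decay of $\psi_0$ at $-\infty$ by a fixed amount, giving $\phi_0(r)=o(r^{-a})$ as $r\to\infty$ for every $a$. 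Only then, after a Kelvin transform, is the conclusion $\phi_0\equiv 0$ obtained---and it requires a \emph{strong unique continuation} theorem (Fall--Felli in the stable case, R\"uland in the unstable case). This unique continuation step is the essential missing ingredient; without it the bootstrap alone does not force vanishing.

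\textbf{Higher modes.} Your plan for $m\ge 2$ relies on ``monotonicity of the indicial roots in $m$'' shrinking the admissible window. But Lemma \ref{indicial} gives explicit indicial roots at $r=\infty$ only for $m=0,1$; for $m\ge 2$ you have no such information to work with. (Also note the relevant grouping is $m=1,\dots,n$ versus $m\ge n+1$, since $\mu_1=\dots=\mu_n=n-1$.) The paper does \emph{not} use indicial roots here at all: for $m\ge n+1$ it passes to the Caffarelli--Silvestre extension and uses an integral comparison with the positive first-mode solution $\Phi_1$ (the extension of $-w'>0$). Integrating the identity $(\mu_m-\mu_1)\frac{y^{1-2s}}{r^2}\Phi_m\Phi_1=\divergence(y^{1-2s}(\Phi_1\nabla\Phi_m-\Phi_m\nabla\Phi_1))$ over $\{\Phi_m>0\}$ forces $\Phi_m\equiv 0$. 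This maximum-principle-type argument is what actually closes the case $m\ge n+1$, and it is absent from your plan.
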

\begin{proof}
 Consider the spherical harmonic decomposition  $\displaystyle\phi=\sum_m \phi_m E_m$ and write $\psi_m=r^{\frac{n-2s}{2}}\phi_m$.

{\bf Step 1:} the mode $m=0$. Define the constant $\kappa=p\beta$ and rewrite equation \eqref{eq0m} for $m=0$ as
\begin{equation}\label{choice-h}
\mathcal L_{\kappa,0}\psi:=P_s^{(0)}(\psi)-\kappa \psi=(\mathcal V-\kappa)\psi=:\tilde h
\end{equation}
for some $\psi=\psi(t)$, $\tilde h=\tilde h(t)$. We use \eqref{asymptotics-potential} and the definition of $\psi$ to estimate the right hand side,
\begin{equation*}\label{asymptotics-h}
\tilde h(t)=\begin{cases}
 O(e^{-(\frac{n-2s}{2}-\sigma)t} )&\mbox{ as }t\to +\infty,\\
 O(e^{(q_1+\frac{2s}{p-1}-\frac{n-2s}{2})t})&\mbox{ as }t\to -\infty,
\end{cases}\end{equation*}
for some $q_1>0$.
Note that there could be solutions to the homogeneous problem of the form $e^{(\sigma_j\pm i\tau_j)t}$, $e^{(-\sigma_j\pm i\tau_j)t}$. But these are not allowed by the choice of weights since $\frac{n-2s}{2}-\frac{2s}{p-1}\in (0, \frac{n-2s}{2})$ and $\frac{n-2s}{2}-\sigma>\sigma_0^{(0)}$ (for this, recall statements \emph{a)} and \emph{b)} in Lemma \ref{indicial}).

Now we apply Theorem \ref{thm:Hardy-potential} with $\delta=q_1+\frac{2s}{p-1}-\frac{n-2s}{2}<-\sigma_0^{(0)}$ and $\delta_0=-\sigma+\frac{n-2s}{2}>\sigma_0^{(0)}$. Obviously, $\delta+\delta_0>0$ if {$\sigma<\frac{2s}{p-1}$.} Then we can find a particular solution $\psi_0$ such that
\begin{equation*}
\psi_0(t)=(e^{\delta t}), \quad\text{as}\quad t\to -\infty,\qquad
\psi_0(t)=(e^{-\delta_0 t}), \quad\text{as}\quad t\to +\infty,
\end{equation*}
so $\psi$ will have the same decay.

Now, by the definition of $h$ in \eqref{choice-h}, we can iterate this process with $\delta=lq_1+\frac{2s}{p-1}-\frac{n-2s}{2}$, $l\geq 2$, and the same $\delta_0$, to obtain better  decay when $t\to -\infty$. As a consequence, we have that $\psi$ decays faster than any $e^{\delta t}$ as $t\to -\infty$, which when translated to $\phi$ means that $\phi=o(r^{-a})$ as $r\to +\infty$ for every $a\in\mathbb N$. By considering the equation satisfied by the Kelvin transform $\hat{\phi}$ of $\phi$, one has
\begin{equation*}
(-\Delta)^s \hat{\phi}-\frac{\tilde{\mathcal{V}}}{r^{2s}}\hat{\phi}=0,
\end{equation*}
where $\tilde{\mathcal{V}} $ satisfies
\begin{equation*}
\tilde{\mathcal{V}}(x)=p|x|^{-2s}w^{p-1}\Big(\frac{x}{|x|^2}\Big)=\left\{\begin{array}{ll}
p\beta(1+o(1)) &\mbox{ as }r\to 0,\\
r^{-2s} &\mbox{ as } r\to \infty,
\end{array}
\right.
\end{equation*}
and $\hat{\phi}(r)=o(r^{a})$ as $r\to 0$ for every $a\in\mathbb N$. The strong unique continuation result of \cite{Fall-Felli} (stable case) and \cite{Ruland} (unstable case) for the operator $P_s^{(0)}-\hat{\mathcal V}$ implies that $\hat{\phi}$ must vanish everywhere, which yields that also $\phi$ must be zero everywhere.

\medskip

{\bf Step 2:} the modes $m=1,\ldots,n$.   Differentiating equation $(-\Delta)^s w=w^p$
with respect to $x_m$ we get
\begin{equation*}
 L\frac{\partial w}{\partial x_m}=0.
\end{equation*}
Since $w$ only depends on $r$, we have $\frac{\partial w}{\partial x_m}=w'(r)E_m$, where $E_m=\frac{x_m}{|x|}$. Using the fact that $-\Delta_{\mathbb S^{n-1}} E_m=\mu_m E_m$, the extension for $w'(r)$ to $\mathbb R^{n+1}_+$
solves \eqref{eqlinear2} with eigenvalue $\mu_m=n-1$, and $\psi_1:=r^{\frac{n-2s}{2}}w'$ satisfies \begin{equation}\label{eq}
P_s^{(m)} \psi-\mathcal V\psi=0.
\end{equation} Note that $w'(r)$ decays like $r^{-(\frac{2s}{p-1}+1)}$ as $r\to \infty$ and decays like $r$ as $r\to 0$.

 Assume that $\phi_m$ decays like $r$ as $r\to 0$ and decays like $r^{\gamma_m^-}$ as $r\to \infty$. Then  $\psi_m=r^{\frac{n-2s}{2}}\phi_m$ is another solution to \eqref{eq}, and  we can find a non-trivial combination of $w'$ and $\phi_m$ that decays faster than $r$ at zero.

Now we claim that if $\phi=r^{-\frac{n-2s}{2}}\psi$, where $\psi$ is any solution to \eqref{eq}, it cannot decay faster than
$r$ at $0$, which yields that $\phi_m=cw'$ for $m=1, \ldots, n$.

To show this claim we argue as in Step 1, taking the indicial roots at $0$ (namely $-(n+1-2s)$ and $1$) and interchanging the role of $+\infty$ and $-\infty$ in the decay estimate. More precisely, we use the facts that if such $\phi$ like $r^{\sigma'}$ for some $\sigma'>1$, i.e. $\sigma'+\frac{n-2s}{2}>\frac{n-2s}{2}+1=\tilde{\sigma}_0^{(1)}$ and decays like $r^{\sigma'_1}$ for some $\sigma'_1<-(\frac{2s}{p-1}+1)$, i.e. $\sigma'_1+\frac{n-2s}{2}<\tilde{\sigma}_1^{(1)}$, similarly to Step 1, one can show that the solution is identically zero, and we conclude that $\phi_m=cw'$ for some $c$.

\medskip

{\bf Step 3:} the remaining modes $m\geq{n+1}$. We use an integral estimate involving the first mode which has a sign, as in \cite{Davila-delPino-Musso-Wei,Davila-delPino-Musso}. 
We note, in particular, that $\phi_1(r)=-w'(r)>0$, which also implies that its extension $\Phi_1$ is positive. In general, the $s$-harmonic extension $\Phi_m$ of $\phi_m$ satisfies
\[\left\{
\begin{array}{r@{}ll}
\divergence(y^{1-2s}\nabla\Phi_m)
    &\,=\mu_{m}\dfrac{y^{1-2s}}{r^2}\Phi_m
    &\text{ in } \R^{n+1}_+,\medskip\\
-{d}_s\lim\limits_{y\to0}y^{1-2s}\p_{y}\Phi_m
    &\,=pu_1^{p-1}\phi_m
    &\text { on } \R^{n+1}_+.
\end{array}
\right.\]
We multiply this equation by $\Phi_1$ and the one for $m=1$ by $\Phi_m$. Their difference gives the equality
\begin{equation*}\begin{split}
(\mu_m-\mu_1)\dfrac{y^{1-2s}}{r^2}\Phi_m\Phi_1
&=\Phi_1\divergence(y^{1-2s}\nabla\Phi_m)-\Phi_m\divergence(y^{1-2s}\nabla\Phi_1)\\
&=\divergence(y^{1-2s}(\Phi_1\nabla\Phi_m-\Phi_m\nabla\Phi_1)).
\end{split}\end{equation*}
Let us integrate over the region where $\Phi_m>0$. The boundary $\p\set{\Phi_m>0}$ is decomposed into a disjoint union of $\p^0\set{\Phi_m>0}$ and $\p^+\set{\Phi_m>0}$, on which $y=0$ and $y>0$, respectively. Hence
\begin{equation*}\begin{split}
0&\leq{d}_s(\mu_m-\mu_1)\int_{\set{\Phi_m>0}}\dfrac{y^{1-2s}\Phi_m\Phi_1}{r^2}\,dxdy\\
&=\int_{\p^0\set{\Phi_m>0}}\left(\phi_1\lim\limits_{y\to0}y^{1-2s}\pnu{\Phi_m}-\phi_m\lim\limits_{y\to0}y^{1-2s}\pnu{\Phi_1}\right)\,dx\\
&+\int_{\p^+\set{\Phi_m>0}}y^{1-2s}\left(\Phi_1\pnu{\Phi_m}-\Phi_m\pnu{\Phi_1}\right)
\,dy.
\end{split}\end{equation*}
The first integral on the right hand side vanishes due to the equations $\Phi_1$ and $\Phi_m$ satisfy. Then we observe that on $\p^+\set{\Phi_m>0}$, one has $\Phi_1>0$, $\pnu{\Phi_m}\leq0$ and $\Phi_m=0$. This forces (using $\mu_m>\mu_1$)
\[\int_{\set{\Phi_m>0}}\dfrac{y^{1-2s}\Phi_m\Phi_1}{r^2}\,dxdy=0,\] which in turn implies $\Phi_m\leq0$. Similarly $\Phi_m\geq0$ and, therefore, $\Phi_m\equiv0$ for $m\geq{n+1}$. This completes the proof of the lemma.
\end{proof}

Now we turn to Fredholm properties. Let $L$ be the operator defined in \eqref{model-linearization}. It is actually simpler to consider the conjugate operator $\mathcal L$ defined in \eqref{eq0} (and its projection \eqref{eq0m}), which is better behaved in  weighted Hilbert spaces and simplifies the notation in the proof of Fredholm properties.

We define weighted $L^2_{\delta,\vartheta}$ function spaces. These contain $L^2_{\text{loc}}$ functions for which the norm
\begin{equation}\label{L2-norm}
\|\phi\|^2_{L^2_{\delta,\vartheta}(\mathbb R^n)}=\int_{\mathbb R^n\setminus B_1} |\phi|^2r^{ n-1-2s-2\vartheta}\,drd\theta +\int_{B_1}|\phi|^2r^{n-1-2s-2\delta}\,drd\theta
\end{equation}
is finite. These should be understood after conjugation \eqref{shift}, as
\begin{equation*}
\|\psi\|^2_{L^2_{\delta,\vartheta}(\mathbb R\times\mathbb S^{n-1})}=\int_{-\infty}^0\int_{\mathbb S^{n-1}} |\psi|^2e^{2\vartheta t}\,d\theta  dt+\int_{0}^{\infty}\int_{\mathbb S^{n-1}}|\psi|^2e^{2\delta t}\,d\theta dt.
\end{equation*}
The spaces $L^2_{\delta,\vartheta}$ and $L^2_{-\delta,-\vartheta}$ are dual with respect to the natural pairing
\begin{equation*}
\langle\psi_1,\psi_2\rangle_*=\int_{\mathbb R^n} \psi_1 \psi_2,
\end{equation*}
for  $\psi_1\in L^2_{\delta,\vartheta}$, $\psi_2\in L^2_{-\delta,-\vartheta}$.

The relation between \eqref{L2-norm} and the weighted $L^\infty$ norms from \eqref{norms} is given by the following simple lemma:

\begin{lemma}\label{lemma:inclusions}
Assume that the parameters satisfy
\begin{equation}\label{parameters}
-\delta<-\sigma+\frac{n-2s}{2},\quad -\vartheta>-\frac{2s}{p-1}+\frac{n-2s}{2},
\end{equation}
Then if $\|\phi\|_*<\infty$, one has that $\|\phi\|_{L^2_{-\delta,-\vartheta}}$ is also finite and the inclusion is continuous.
\end{lemma}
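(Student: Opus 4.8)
The plan is to verify directly that a function with finite $\|\cdot\|_*$ norm has finite weighted $L^2$ norm, by splitting the integral into the region $B_1$ (where the $r^{-\sigma}$ bound applies) and the region $\R^n\setminus B_1$ (where the $r^{-\frac{2s}{p-1}}$ bound applies), and checking integrability of each piece against the respective weight. In the interior region, $|\phi(x)|\le \|\phi\|_* |x|^{-\sigma}$, so $|\phi|^2 r^{n-1-2s-2\delta}\le \|\phi\|_*^2 r^{n-1-2s-2\delta-2\sigma}$, and this is integrable near $r=0$ precisely when $n-2s-2\delta-2\sigma>0$, i.e.\ $-\delta>-\frac{n-2s}{2}+\sigma$, which is the first condition in \eqref{parameters}. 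In the exterior region, $|\phi(x)|\le\|\phi\|_* |x|^{-\frac{2s}{p-1}}$, so $|\phi|^2 r^{n-1-2s-2\vartheta}\le \|\phi\|_*^2 r^{n-1-2s-2\vartheta-\frac{4s}{p-1}}$, which is integrable at $r=\infty$ precisely when $n-2s-2\vartheta-\frac{4s}{p-1}<0$, i.e.\ $-\vartheta>-\frac{n-2s}{2}+\frac{2s}{p-1}$, the second condition in \eqref{parameters}. Adding the two bounds gives $\|\phi\|_{L^2_{-\delta,-\vartheta}}^2\le C\|\phi\|_*^2$ for a constant $C=C(n,s,p,\sigma,\delta,\vartheta)$ coming from the radial integrals and the measure of $\mathbb S^{n-1}$, which also shows the inclusion is continuous.

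One small bookkeeping point to address is the sign conventions: the $L^2_{-\delta,-\vartheta}$ norm in \eqref{L2-norm} has weight $r^{n-1-2s+2\delta}$ in $\R^n\setminus B_1$ and $r^{n-1-2s+2\vartheta}$ in $B_1$ (replacing $\delta\mapsto-\delta$, $\vartheta\mapsto-\vartheta$ in the displayed formula), equivalently weights $e^{-2\vartheta t}$ for $t<0$ and $e^{-2\delta t}$ for $t>0$ on the cylinder. Keeping this straight, the exponents to check are exactly the ones above and the conditions \eqref{parameters} are what make both radial integrals converge. It is worth remarking that the hypothesis $\sigma\in(0,n-2s)$ together with the standing assumption $\sigma<|\Real(\gamma_0^+)|<\frac{n-2s}{2}$ and the supercriticality $\frac{2s}{p-1}<\frac{n-2s}{2}$ ensure that the intervals of admissible $\delta,\vartheta$ are nonempty, so the lemma is not vacuous.

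There is essentially no obstacle here; the only mild subtlety is presenting the computation cleanly in the conjugated variables $(t,\theta)$ versus the original radial variable $r$, and making sure the strict inequalities in \eqref{parameters} are exactly what is needed (as opposed to non-strict) — strictness is required because the borderline exponents $r^{-1}\,dr$ near $0$ and near $\infty$ fail to be integrable. I would write the proof in two or three lines using the cylinder variables, since there the weights are pure exponentials and the integrability conditions are transparent: $\int_0^\infty e^{2\delta t}e^{-2\delta_0 t}\,dt<\infty$ type estimates, where the decay rate of $|\psi|=r^{\frac{n-2s}{2}}|\phi|$ in each regime is read off from the $\|\cdot\|_*$ bound.
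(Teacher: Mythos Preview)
Your approach is the direct verification the paper intends; the paper in fact labels this a ``simple lemma'' and gives no proof. However, your bookkeeping has slipped. In the opening paragraph you compute with the weights of $L^2_{\delta,\vartheta}$ rather than $L^2_{-\delta,-\vartheta}$: by the definition \eqref{L2-norm}, the interior weight for $\|\phi\|_{L^2_{-\delta,-\vartheta}}$ is $r^{\,n-1-2s+2\delta}$ and the exterior weight is $r^{\,n-1-2s+2\vartheta}$. In your second paragraph you then also swap the roles of $\delta$ and $\vartheta$ (in \eqref{L2-norm}, $\delta$ is the interior parameter and $\vartheta$ the exterior one). Finally, in your exterior step you silently flip an inequality: $n-2s-2\vartheta-\tfrac{4s}{p-1}<0$ gives $-\vartheta<-\tfrac{n-2s}{2}+\tfrac{2s}{p-1}$, not ``$>$''.

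With the correct signs the computation reads: interior, $\int_0^1 r^{-2\sigma+n-1-2s+2\delta}\,dr<\infty$ iff $-\delta<-\sigma+\tfrac{n-2s}{2}$; exterior, $\int_1^\infty r^{-\frac{4s}{p-1}+n-1-2s+2\vartheta}\,dr<\infty$ iff $-\vartheta>-\tfrac{2s}{p-1}+\tfrac{n-2s}{2}$. These are exactly the two conditions in \eqref{parameters}, and the bound $\|\phi\|_{L^2_{-\delta,-\vartheta}}\le C\|\phi\|_*$ follows. Once these signs are corrected your argument is complete.
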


\begin{proof}[Proof of Proposition \ref{invertibility}]   First note that elliptic estimates imply that $\mathcal L$ is a densely defined, closed graph operator. Moreover, the adjoint of
\begin{equation*}\label{operator1}
\mathcal L:L^2_{-\delta,-\vartheta}\to L^2_{-\delta,-\vartheta}
\end{equation*}
is precisely
\begin{equation*}\label{operator2}
\mathcal L^*=\mathcal L:L^2_{\delta,\vartheta}\to L^2_{\delta,\vartheta}.
\end{equation*}

Similarly to the arguments in Section 8 of \cite{acdfgw}, one can show that the linear operator $\mathcal L$ satisfies good Fredholm properties and, in particular,
$$\Ker(\mathcal L^*)^\bot=\rango( \mathcal L).$$

By checking the proof for Lemma \ref{non-degeneracy}, one can also get that for this linear problem $\mathcal L^*\phi=0$, the mode $0$ and mode $m$ for $m\geq n+1$ all have trivial kernels. For mode $1$, there is a one dimensional solution spanned by $w'(r)$.  It is easy to see that $w'(r)\in L^2_{\delta,\vartheta}$ iff $p<\frac{n+2s-1}{n-2s-1}$, if one chooses the parameters as in \eqref{parameters}, since
\begin{equation*}
\begin{split}
\|\phi\|^2_{L^2_{\delta,\vartheta}(\mathbb R^n)}&=\int_{\mathbb R^n\setminus B_1} |\phi|^2r^{ n-1-2s-2\vartheta}\,drd\theta +\int_{B_1}|\phi|^2r^{n-1-2s-2\delta}\,drd\theta\\
&\sim \int_1^\infty r^{-\frac{4s}{p-1}-2}r^{n-1-2s-\frac{4s}{p-1}+n-2s}\,dr+\int_0^1 r^2r^{n-1-2s-2\sigma+n-2s}\,dr<+\infty
\end{split}
\end{equation*}
for suitable small $\sigma>0$ if  $p<\frac{n+2s-1}{n-2s-1}$.

From the above argument and the Fredholm property we  conclude that:
\begin{itemize}
\item If $\frac{n+2s}{n-2s}<p<\frac{n+2s-1}{n-2s-1}$, then
\begin{equation*}
L\phi=h
\end{equation*}
is solvable iff $\int_{\mathbb R^n}h\frac{\partial w}{\partial x_i}\,dx=0$ for $i=1, \cdots,n$.

\item If $p>\frac{n+2s-1}{n-2s-1}$, then
\begin{equation*}
L\phi=h
\end{equation*}
is always solvable.
\end{itemize}
Moreover, the Fredholm property yields that
\begin{equation}\label{L^2}
\|\phi\|_{L^2_{\delta,\vartheta}(\mathbb R^n)}\leq C\|h\|_{L^2_{\delta-2s,\vartheta-2s}(\mathbb R^n)}.
\end{equation}
We will show that this estimate still holds in weighted $L^\infty$ norm, i.e. \eqref{bound} holds.

As in \cite{acdfgw}, combining with the Green's representation formula, one has
\begin{equation*}
\phi(x)=-\int_{\R^n}G(x,y)pw^{p-1}\phi(y)\,dy-\int_{\R^n}G(x,y)h(y)\,dy.
\end{equation*}
First, for  $|x|\leq 1$,
\begin{equation*}
\begin{split}
\int_{\R^n}G(x,y)h(y)\,dy&\leq C\left[\int_{\{|y|<\frac{|x|}{2}\}}+\int_{\{\frac{|x|}{2}\leq |y|\leq 2|x|\}}+\int_{\{|y|>2|x|\}}\right]\frac{h(y)}{|x-y|^{n-2s}}\,dy\\
&\leq C\left[ \int_{\{|y|<\frac{|x|}{2}\}}\frac{\|h\|_{**}|y|^{-\sigma-2s}}{|x|^{n-2s}}\,dy+\int_{\{|y|\leq 3|x|\}}\frac{\|h\|_{**}|x|^{-\sigma-2s}}{|y|^{n-2s}}\,dy\right.\\
&\quad+\left.\int_{\{2>|y|>2|x|\}}
\frac{\|h\|_{**}|y|^{-\sigma-2s}}{|y|^{n-2s}} \,dy+\int_{\{|y|\geq 2\}}\frac{\|h\|_{**}|y|^{-\frac{2s}{p-1}-2s}}{|y|^{n-2s}}\,dy\right]\\
&\leq C|x|^{-\sigma}\|h\|_{**}.
\end{split}
\end{equation*}
Next, by the definition of weighted $L^2$ norm and relations \eqref{parameters} and \eqref{L^2},
\begin{equation*}
\begin{split}
\int_{\R^n}&G(x,y)pw^{p-1}\phi(y)\,dy\\&\leq C\left[\int_{\{|y|<\frac{|x|}{2}\}}+\int_{\{\frac{|x|}{2}\leq |y|\leq 2|x|\}}+\int_{\{2>|y|>2|x|\}}\frac{\phi}{|x-y|^{n-2s}}\,dy+\int_{\{|y|>2\}}\frac{|y|^{-2s}\phi}{|x-y|^{n-2s}}\,dy  \right]\\
&\leq C\Big( \int_{\{|y|<2\}}\phi^2 r^{-2s-2\delta}\,dy\Big)^{\frac{1}{2}}\\
&\quad\cdot \left[  \Big(\int_{\{|y|<\frac{|x|}{2}\}}\frac{|y|^{2s+2\delta}}{|x|^{2(n-2s)}}\,dy\Big)^{\frac{1}{2}}
+\Big(\int_{\{|y|<3|x|\}}\frac{|x|^{2s+2\delta}}{|y|^{2(n-2s)}}\,dy\Big)^{\frac{1}{2}} +(\int_{\{2|x|<|y|<2\}} \frac{|y|^{2s+2\delta}}{|y|^{2(n-2s)}}\, dy\Big)^{\frac{1}{2}}\right]\\
&+C\Big(\int_{\{|y|>2\}}\phi^2r^{-2s-2\vartheta}dy\Big)^{\frac{1}{2}}\Big(\int_{\{|y|>2\}}
\frac{|y|^{-4s+2s+2\vartheta}}{|y|^{2(n-2s)}}\,dy \Big)^{\frac{1}{2}}\\
&\leq C\|\phi\|_{L_{\delta,\vartheta}^2(\R^n)}|x|^{-\sigma}\leq C\|h\|_{L_{\delta-2s,\vartheta-2s}^2(\R^n)}|x|^{-\sigma}\\
&\leq C\|h\|_{**}|x|^{-\sigma},
\end{split}
\end{equation*}
for suitable $\delta, \vartheta$ satisfying \eqref{parameters}. Thus one has
\begin{equation*}
\sup_{\{|x|\leq 1\}}|x|^{\sigma}|\phi|\leq C\|h\|_{**}.
\end{equation*}
Similarly, for $|x|\leq e^{R}$, we still obtain
\begin{equation*}
\sup_{\{1\leq |x|\leq e^R\}}|x|^{\frac{2s}{p-1}}|\phi|\leq C_R\|h\|_{**},
\end{equation*}
for some constant depending on $R$. This implies that  the weighted $L^\infty$ norm of $\phi$ in any compact set can be bounded by the weighted $L^\infty$ norm of $h$. So one only needs to worry about the norm at infinity.  For this, we go back to the projected problems:
\begin{equation*}
\mathcal L_{\kappa,m} \psi_m:=P_s^{(m)}(\psi_m)-\kappa\psi_m=\tilde{h}_m+(\mathcal V-\kappa)\psi_m, \quad m=0,1,\ldots.
\end{equation*}
In order to show the estimate for $\phi$ at infinity, it is enough to prove that
\begin{equation*}
\|e^{\frac{-2s}{p-1}t}\psi_m\|_{L^\infty(\{t\leq -R\})}\leq C\|e^{\frac{-2s}{p-1}t}\tilde{h}_m\|_{L^\infty(\{t\leq 0\})}+\|e^{-\sigma t}\tilde{h}_m\|_{L^\infty(\{t\geq 0\})}
\end{equation*}
for $R$ large enough. But this follows from  Theorem \ref{thm:Hardy-potential} and the expression for $\mathcal V$ in (\ref{asymptotics-potential}) by taking $\delta=\frac{2s}{p-1}$ and $\delta_0=-\sigma$ (See Lemma 6.7 and the proof for Proposition 6.3 in \cite{acdfgw}). Here we use the assumption on $\sigma$ that $0<\sigma<\frac{2s}{p-1}$ such that $\delta+\delta_0>0$.

This completes the proof of the Proposition.
\end{proof}

\section{The operator $(-\Delta)^s \phi+V_\lambda \phi-pw^{p-1}\phi$ in $\R^n$}

In this section we study the following linear problem in $\R^n$ in suitable weighted spaces:
\begin{equation}\label{lp}
\begin{cases}
(-\Delta)^s \phi+V_\lambda \phi-pw^{p-1}\phi
=h+\sum\limits_{i=1}^n c_iZ_i,\\
\lim\limits_{|x|\to \infty}\phi(x)=0,
\end{cases}
\end{equation}
where $c_i$ are real constants, and $Z_i$ are the kernels defined by
\begin{equation}\label{kernel}
Z_i(x)=\frac{\partial w}{\partial x_i}, \ i=1,\cdots,n,
\end{equation}
 and
\begin{equation*}
V_\lambda(x)=\lambda^{-2s}V\Big(\frac{x-\xi}{\lambda}\Big) \mbox{ for }\lambda>0, \ \xi \in \R^n.
\end{equation*}
Since $V_\lambda$ has a concentration  point at $\xi$, we define
\begin{equation*}
\begin{split}
\|\phi\|_{*,\xi}&=\sup_{\{|x-\xi|\leq 1\}}|x-\xi|^\sigma |\phi(x)|+\sup_{\{|x-\xi|>1\}}|x-\xi|^{\frac{2s}{p-1}}|\phi(x)|,\\
\|h\|_{**,\xi}&=\sup_{\{|x-\xi|\leq 1\}}|x-\xi|^{\sigma+2s} |h(x)|+\sup_{\{|x-\xi|>1\}}|x-\xi|^{\frac{2s}{p-1}+2s}|h(x)|.
\end{split}
\end{equation*}
We will take $|\xi|\leq \Lambda$ for some $\Lambda>0$.

For the linear theory, it suffices to assume that
\begin{equation}\label{assumptionv}
V\in L^\infty(\R^n), \quad V\geq 0, \quad V(x)=o(|x|^{-2s}) \mbox{ as }|x|\to \infty.
\end{equation}
Then we have the following solvability result:
\begin{proposition}\label{pro1}
We have:

\begin{itemize}

\item
If $p>\frac{n+2s-1}{n-2s-1}$, for $\lambda>0$ small enough, equation \eqref{lp} has a solution $\phi:=\mathcal{T}_\lambda(h)$ with $c_i=0$, and it satisfies
\begin{equation*}
\|\mathcal{T}_\lambda(h)\|_{*,\xi}\leq C\|h\|_{**,\xi}.
\end{equation*}
\item
If $\frac{n+2s}{n-2s}<p<\frac{n+2s-1}{n-2s-1}$, for $\lambda>0$ small enough, equation \eqref{lp} has a solution $(\phi,c_1,\cdots,c_n):=\mathcal{T}_\lambda(h)$, and it satisfies
\begin{equation*}
\|\phi\|_{*,\xi}+\sum_{i=1}^n|c_i|\leq C\|h\|_{**,\xi}.
\end{equation*}
The constant $C>0$ is independent of the parameter $\lambda$.
\end{itemize}
\end{proposition}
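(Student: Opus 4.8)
The plan is to treat $V_\lambda\phi$ as a small perturbation of the model linear operator $L\phi = (-\Delta)^s\phi - pw^{p-1}\phi$ analyzed in Proposition \ref{invertibility}, and to close the argument by a fixed-point/contraction scheme in the weighted spaces $\|\cdot\|_{*,\xi}$, $\|\cdot\|_{**,\xi}$. Since the weights are centered at $\xi$ with $|\xi|\le\Lambda$ bounded, the norms $\|\cdot\|_{*,\xi}$ and $\|\cdot\|_{**,\xi}$ are equivalent (with $\lambda$-independent constants) to the corresponding norms centered at the origin; thus Proposition \ref{invertibility} applies verbatim after this harmless translation. Write $L_\lambda\phi = L\phi + V_\lambda\phi$. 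In the case $p>\frac{n+2s-1}{n-2s-1}$ there is no orthogonality obstruction, so I would set up the problem $L\phi = h - V_\lambda\phi$ and solve it by iteration: define $\phi_0 = \mathcal T(h)$ using the solution operator from Proposition \ref{invertibility} and $\phi_{k+1} = \mathcal T(h - V_\lambda\phi_k)$, where $\mathcal T$ is the linear solver with $\|\mathcal T(g)\|_* \le C\|g\|_{**}$. In the case $\frac{n+2s}{n-2s}<p<\frac{n+2s-1}{n-2s-1}$ one carries along the Lagrange multipliers $c_i$: the solver $\mathcal T$ now produces $(\phi,c_1,\dots,c_n)$ solving $L\phi = h - V_\lambda\phi + \sum_i c_i Z_i$ under no extra hypothesis on the right-hand side (the obstruction is absorbed into the $c_i$), and the same iteration applies.

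The crucial quantitative point is the bound
\[
\|V_\lambda \phi\|_{**,\xi} \le o(1)\,\|\phi\|_{*,\xi} \quad\text{as } \lambda\to 0,
\]
uniformly in $|\xi|\le\Lambda$. To see this, split the supremum defining $\|V_\lambda\phi\|_{**,\xi}$ into the region $|x-\xi|\le 1$ and $|x-\xi|>1$. On $|x-\xi|\le 1$ we need $|x-\xi|^{\sigma+2s}|V_\lambda(x)||\phi(x)| \le |x-\xi|^{\sigma+2s}|V_\lambda(x)|\,|x-\xi|^{-\sigma}\|\phi\|_{*,\xi} = |x-\xi|^{2s}|V_\lambda(x)|\,\|\phi\|_{*,\xi}$, and since $V_\lambda(x) = \lambda^{-2s}V((x-\xi)/\lambda)$ with $V\in L^\infty$ and $V(z)=o(|z|^{-2s})$, one checks $\sup_{|x-\xi|\le1}|x-\xi|^{2s}|V_\lambda(x)| = \sup_{|z|\le 1/\lambda}|z|^{2s}|V(z)| \to 0$ as $\lambda\to0$ (use boundedness of $V$ near $z=0$ and the decay $o(|z|^{-2s})$ for large $z$). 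On $|x-\xi|>1$ a similar computation using the weight $|x-\xi|^{\frac{2s}{p-1}}$ gives the same $o(1)$ factor. By the same estimate the map $g\mapsto V_\lambda\,\mathcal T(g)$ is a contraction on the $\|\cdot\|_{**,\xi}$-ball (resp. $\phi\mapsto \mathcal T(h-V_\lambda\phi)$ on the $\|\cdot\|_{*,\xi}$-ball) for $\lambda$ small, so the Neumann-type iteration converges and yields a solution with
\[
\|\phi\|_{*,\xi} + \sum_{i=1}^n|c_i| \le \frac{C}{1-o(1)}\|h\|_{**,\xi} \le C'\|h\|_{**,\xi},
\]
with $C'$ independent of $\lambda$ and $\xi$.

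Two smaller points need attention. First, one must verify $\lim_{|x|\to\infty}\phi(x)=0$: this is immediate from $\|\phi\|_{*,\xi}<\infty$, since the second term of the norm forces $|\phi(x)|\le \|\phi\|_{*,\xi}|x-\xi|^{-2s/(p-1)}\to0$. Second, in the constrained case one should record that the constants $c_i$ obtained are genuinely determined by $h$ and depend linearly and boundedly on it; this follows because at each iteration step the $c_i$'s are the (bounded) linear functionals of the right-hand side produced by $\mathcal T$, and the series defining them converges in $\mathbb R^n$ by the same contraction estimate. I expect the main obstacle to be purely bookkeeping: making the splitting of the Green's-function/weighted-norm estimates for $V_\lambda\phi$ genuinely uniform in $\xi$ and in $\lambda$, and checking that the solution operator $\mathcal T$ from Proposition \ref{invertibility}—which was stated for weights centered at the origin—transfers without loss to weights centered at $\xi$; both are routine but must be done carefully to keep all constants independent of $\lambda$.
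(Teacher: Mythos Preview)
Your contraction scheme breaks down at the key quantitative step. You claim
\[
\sup_{|x-\xi|\le 1}|x-\xi|^{2s}|V_\lambda(x)|
=\sup_{|z|\le 1/\lambda}|z|^{2s}|V(z)|\longrightarrow 0
\quad\text{as }\lambda\to 0,
\]
but this is false: the domain of the supremum \emph{expands} as $\lambda\to 0$, so the quantity is non-decreasing and converges to $\sup_{\R^n}|z|^{2s}|V(z)|$, which is a fixed positive number (unless $V\equiv 0$). The hypotheses $V\in L^\infty$ and $V(z)=o(|z|^{-2s})$ control $|z|^{2s}V(z)$ only near $z=0$ and near $z=\infty$; for $|z|$ of order one there is no smallness at all. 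In other words, $V_\lambda$ is \emph{not} a small perturbation of the model operator $L$ in the $\|\cdot\|_{**,\xi}\!\to\!\|\cdot\|_{*,\xi}$ sense near the concentration point, so the Neumann iteration $\phi\mapsto\mathcal T(h-V_\lambda\phi)$ does not contract. (Your outer-region estimate, where $|z|>1/\lambda$, is fine; the failure is entirely in the intermediate zone $|x-\xi|\sim\lambda$.)

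The paper circumvents exactly this obstacle by a two-zone decomposition $\phi=\varphi+\psi$ with cutoffs $\xi_0,\xi_1$ supported on small but $\lambda$-independent annuli around $\xi$. Near $\xi$ the full potential $V_\lambda$ is kept in the operator for $\psi$, namely $(-\Delta)^s\psi+V_\lambda\psi-p(1-\xi_0)w^{p-1}\psi$, which is invertible because $V_\lambda\ge 0$ and $\|p(1-\xi_0)w^{p-1}\|_{L^{n/(2s)}}$ is small when the cutoff radius is small. Away from $\xi$ one solves for $\varphi$ using the model operator from Proposition~\ref{invertibility}, and only there does $V_\lambda$ enter as a genuinely small term, controlled by $a(\varrho/\lambda)=\sup_{|z|\ge \varrho/\lambda}|z|^{2s}V(z)\to 0$. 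The fixed point is then set up for $\varphi$ alone (with $\psi=\psi(\varphi)$ substituted), and the contraction constant is $C\big(\varrho^{n-2s}+\varrho^{-2s}a(\varrho/\lambda)\big)$, made small by first fixing $\varrho$ small and then taking $\lambda$ small. Your argument would need this kind of splitting to go through.
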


\begin{proof}
The proof follows the argument in Section 3 of \cite{ddmw}. So here we only sketch the proof. We solve the linear problem near the point $\xi$ and away from this point. For this, decompose $\phi=\varphi+\psi$, where $\varphi, \psi$ satisfy
\begin{equation}\label{varphi}
\begin{cases}
\displaystyle(-\Delta)^s\varphi-pw^{p-1}\varphi=p\xi_0w^{p-1}\psi-\xi_1V_\lambda\varphi+\xi_1h+\sum_{i=1}^nc_iZ_i,\\
\lim\limits_{|x|\to \infty}\varphi(x)=0,
\end{cases}
\end{equation}
and
\begin{equation}\label{psi}
\begin{cases}
(-\Delta)^s\psi-p(1-\xi_0)w^{p-1}\psi+V_\lambda\psi=-(1-\xi_1)V_\lambda\varphi+(1-\xi_1)h,\\
\lim\limits_{|x|\to \infty}\varphi(x)=0,
\end{cases}
\end{equation}
where $\xi_0, \xi_1$ are two cut off functions:
\begin{equation*}
\xi_0(x)=0 \mbox{ for }|x-\xi|\leq R, \quad \xi_0(x)=1 \mbox{ for }|x-\xi|\geq 2R,
\end{equation*}
and
\begin{equation*}
\xi_1(x)=0 \mbox{ for }|x-\xi|\leq \varrho, \quad \xi_1(x)=1 \mbox{ for }|x-\xi|\geq 2\varrho,
\end{equation*}
for $\varrho,R$  two positive constants independent of $\lambda$ to be fixed later and such that $2\varrho\leq R$.

Given $\|\varphi\|_{*,\xi}<\infty$, since $\|p(1-\xi_0)w^{p-1}\|_{L^{\frac{n}{2s}}}\to 0$ as $R\to 0$, equation \eqref{psi} has a solution $\psi=\psi(\varphi)$ if $R>0$ is small enough (this is because the homogeneous problem for \eqref{psi} has only the trivial solution).
Moreover, $\psi(x)=O(|x|^{-(n-2s)})$ as $|x|\to \infty$, so the right hand side of \eqref{varphi} has finite $\|\cdot\|_{**}$ norm, by Proposition \ref{invertibility}, \eqref{varphi} has a solution when $\psi=\psi(\varphi)$ which we write as $F(\varphi)$. We claim that $F(\varphi)$ has a fixed point in the Banach space
\begin{equation*}
X=\{\varphi\in L^\infty(\R^n), \|\varphi\|_*<\infty\}
\end{equation*}
equipped with
\begin{equation*}
\|\varphi\|_X=\sup_{\{|x|\leq 1\}}|\varphi|+\sup_{\{|x|\geq 1\}}|x|^{\frac{2s}{p-1}}|\varphi|.
\end{equation*}

Following the argument in the proof of Proposition 3.1 in \cite{ddmw}, we establish pointwise estimates for the solution $\psi$ of \eqref{psi}. Then we can find a bound of the $\|\cdot\|_{**}$ norm of the right hand side of \eqref{varphi}. Since the proof is similar, we omit the details and just state the estimates here:
\begin{equation*}
\begin{split}
\|\psi\|_{*,\xi}&\leq (C_\varrho+C\varrho^{n-2s})\|\varphi\|_X+C_\varrho\|h\|_{**,\xi},\\
\|\xi_0w^{p-1}\psi(\varphi)\|_{**,\xi}&\leq C\varrho^{n-2s}\|\varphi\|_X+C_\varrho\|h\|_{**,\xi},\\
\|\xi_1 V_\lambda \varphi\|_{**,\xi}&\leq C\varrho^{-2s}\|\varphi\|_X a\big(\frac{\varrho}{\lambda}\big),
\end{split}
\end{equation*}
where
\begin{equation*}
a(r)=\sup_{\{|x|\geq r\}}|x|^{2s}V(x), \quad a(r) \to 0\mbox{ as }r\to \infty.
\end{equation*}

By the linear theory in Section 2, we know that given $\varphi\in X$, the solution $F(\varphi)$ to (\ref{varphi}) satisfies
\begin{equation*}
\|F(\varphi)\|_{*,\xi}\leq C\|\xi_0w^{p-1}\psi(\varphi)\|_{**,\xi}+C\|\xi_1 V_\lambda \varphi\|_{**,\xi}+C\|\xi_1h\|_{**,\xi}.
\end{equation*}
But since the right hand side of (\ref{varphi}) is bounded near the origin, by regularity estimates, we derive

\begin{equation*}
\|F(\varphi)\|_X\leq C\|\xi_0w^{p-1}\psi(\varphi)\|_{**,\xi}+C\|\xi_1 V_\lambda \varphi\|_{**,\xi}+C\|\xi_1h\|_{**,\xi}
\end{equation*}
and
\begin{equation*}
\begin{split}
\|F(\varphi_1)-F(\varphi_2)\|_X&\leq C\varrho^{n-2s}\|\varphi_1-\varphi_2\|_X+C a(\frac{\varrho}{\lambda})\varrho^{-2s}\|\varphi_1-\varphi_2\|_X\\
&\leq C\big(\varrho^{n-2s}+\varrho^{-2s}a\big(\frac{\varrho}{\lambda}\big)\big)\|\varphi_1-\varphi_2\|_X.
\end{split}
\end{equation*}
By choosing $\varrho>0 $ small enough and $\lambda$ small enough such that $\frac{\varrho}{\lambda}\to \infty$, we can prove that $F(\varphi)$ is a contraction mapping, and we get a fixed point $\varphi\in X$. Moreover, thanks to the linear theory in Section \ref{section:linear}, we have
\begin{equation*}
\|\varphi\|_X\leq C\big(\varrho^{n-2s}+\varrho^{-2s}a\big(\frac{\varrho}{\lambda}\big)\big)\|\varphi\|_X+C_\varrho \|h\|_{**,\xi},
\end{equation*}
which yields
\begin{equation*}
\|\varphi\|_X\leq C\|h\|_{**,\xi}.
\end{equation*}
Combining with the estimate for $\psi$, one has
\begin{equation*}
\|\phi\|_{*,\xi}\leq C\|h\|_{**,\xi}
\end{equation*}
for some $C>0$ independent of $\lambda $ small.

\end{proof}

\section{Proof of Theorem \ref{thm1}}

Assume that $p>\frac{n+2s-1}{n-2s-1}$. We aim to find a solution to \eqref{problem-1} of the form $u=w+\phi$, where $w$ is the radial entire solution found in Section \ref{section:entire} (recall that $w$ satisfies \eqref{w}). This  yields the following equation for the function $\phi$,
\begin{equation*}
(-\Delta)^s\phi+V_\lambda \phi-pw^{p-1}\phi=N(\phi)-V_\lambda w,
\end{equation*}
where
\begin{equation*}
N(\phi)=(w+\phi)^p-w^p-pw^{p-1}\phi.
\end{equation*}

Using the operator $\mathcal{T}_\lambda$ defined in Proposition \ref{pro1}, we are led to solve the fixed point problem:
\begin{equation*}\label{fixedpoint}
\phi=\mathcal{T}_\lambda(N(\phi)-V_\lambda w):=\mathcal{A}(\phi).
\end{equation*}
Following the argument in Section 4 of \cite{ddmw}, one obtains the following estimate for $N(\phi)$ and $V_\lambda w$:
\begin{equation*}
\|V_\lambda w\|_{**}\leq C\Big(\lambda^\sigma R^{2s+\sigma}\|V\|_{L^\infty}+a(R)+a\big(\frac{1}{\lambda}\big)\Big)\leq Ca(R)
\end{equation*}
as $\lambda\to 0$. Choosing $R\to \infty$, then as $\lambda\to 0$ one has that $\|V_\lambda w\|_{**}\to 0$. In addition, for the nonlinear term  $N(\phi)$ we have the bound
\begin{equation*}
\|N(\phi)\|_{**}\leq C(\|\phi\|_*^2+\|\phi\|_*^p).
\end{equation*}

Consider the set
\begin{equation*}
\mathcal{F}=\{\phi:\R^n\to \R, \|\phi\|_*\leq \rho\}
\end{equation*}
where $\rho>0$ small is to be chosen later.

It is standard to get the following estimates:
\begin{equation*}
\begin{split}
\|\mathcal{A}(\phi)\|_*&\leq C(\|\phi\|_*^2+\|\phi\|_*^p+\|V_\lambda w\|_{**})<\rho,\\
\|\mathcal{A}(\phi_1)-\mathcal{A}(\phi_2)\|_*&\leq C(\|\phi_1\|^{\min\{p-1,1\}}_*+\|\phi_2\|^{\min\{p-1,1\}}_*)\|\phi_1-\phi_2\|_*<\frac{1}{2}\|\phi_1-\phi_2\|_*
\end{split}
\end{equation*}
for $\lambda $ small and $\rho$ small. One can find that for $\rho$ small enough, $\mathcal{A}(\phi)$ is a contraction mapping in $\mathcal{F}$, thus has a fixed point in this set. This finishes the proof of the theorem.
\qed

\section{The case $\frac{n+2s}{n-2s}<p\leq \frac{n+2s-1}{n-2s-1}$}

In this case, because of the presence of the kernels $Z_i$ defined in \eqref{kernel}, one needs to introduce free parameters and rescale around a point $\xi$, to be chosen later. For this reason, we make the change of variable $\lambda^{-\frac{2s}{p-1}}u(\frac{x-\xi}{\lambda})$ and look for a solution of the form $u=w+\phi$, then $\phi$ satisfies
\begin{equation*}
(-\Delta)^s \phi+V_\lambda \phi-pw^{p-1}\phi=N(\phi)-V_\lambda w,
\end{equation*}
where
\begin{equation*}
V_\lambda(x)=\lambda^{-2s}V\big(\frac{x-\xi}{\lambda}\big).
\end{equation*}

Similarly to the proof of Lemma 5.1 in \cite{ddmw} can show the following result, for which we omit the proof:

\begin{lemma}
Let $\frac{n+2s}{n-2s}<p<\frac{n+2s-1}{n-2s-1}$ and  $\Lambda>0$. Assume that $V$ satisfies \eqref{assumptionv}. Then there exists $\ve_0>0$ such that for $|\xi|<\Lambda$ and $\lambda<\ve_0$, there exists $(\phi, c_1, \cdots, c_n) $ solution of
\begin{equation}\label{nonlinear1}
\left\{\begin{array}{l}
\displaystyle(-\Delta)^s \phi+V_\lambda \phi-pw^{p-1}\phi=N(\phi)-V_\lambda w+\sum_{i=1}^n c_iZ_i,\\
\displaystyle\lim_{|x|\to \infty}\phi(x)=0.
\end{array}
\right.
\end{equation}
We have, in addition,
\begin{equation*}
\|\phi\|_{*,\xi}+\sum_{i=1}^n |c_i|\to 0 \mbox{ as }\lambda\to 0.
\end{equation*}

If $V$ also satisfies
\begin{equation*}\label{assumptionv1}
V(x)\leq C|x|^{-\mu} \mbox{ for \ all }x
\end{equation*}
for some $\mu>2s$, then for $0<\sigma\leq \mu-2s, \sigma<n-2s$, one has
\begin{equation*}
\|\phi\|_{*,\xi}\leq C_\sigma \lambda^\sigma \mbox{ for \ all }0<\lambda<\ve_0.
\end{equation*}
\end{lemma}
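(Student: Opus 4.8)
The plan is to set up a Lyapunov--Schmidt/fixed-point scheme in the weighted space $X$ (with the norm $\|\cdot\|_{*,\xi}$ recentred at $\xi$), exactly parallel to the proof of Theorem \ref{thm1} in the previous section but now keeping the parameters $c_i$ as part of the unknown. Using the solvability operator $\mathcal{T}_\lambda$ of Proposition \ref{pro1} (second bullet, the case $\frac{n+2s}{n-2s}<p<\frac{n+2s-1}{n-2s-1}$), I would recast \eqref{nonlinear1} as the fixed-point problem
\begin{equation*}
(\phi,c_1,\dots,c_n)=\mathcal{T}_\lambda\big(N(\phi)-V_\lambda w\big)=:\mathcal{A}(\phi),
\end{equation*}
and look for a fixed point in $\mathcal{F}=\{\phi:\|\phi\|_{*,\xi}\leq\rho\}$ for suitably small $\rho$.

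The key estimates are the same two ingredients as in Theorem \ref{thm1}. First, the nonlinear term obeys $\|N(\phi)\|_{**,\xi}\leq C(\|\phi\|_{*,\xi}^2+\|\phi\|_{*,\xi}^p)$, which follows from a Taylor expansion of $(w+\phi)^p-w^p-pw^{p-1}\phi$ together with the decay of $w$ built into the $*$-norm; this gives the contraction and self-map properties once $\rho$ is small. Second, and this is where the potential enters, one needs $\|V_\lambda w\|_{**,\xi}\to 0$ as $\lambda\to 0$. Splitting $\R^n$ into $\{|x-\xi|\leq R\}$, $\{R\leq|x-\xi|\leq 1/\lambda\}$ and $\{|x-\xi|\geq 1/\lambda\}$ and using $V_\lambda(x)=\lambda^{-2s}V((x-\xi)/\lambda)$ with the hypothesis $V(x)=o(|x|^{-2s})$ (that is, $a(r)=\sup_{|x|\geq r}|x|^{2s}V(x)\to 0$), one gets a bound of the form $C(\lambda^\sigma R^{2s+\sigma}\|V\|_{L^\infty}+a(R)+a(1/\lambda))$; sending first $\lambda\to 0$ and then $R\to\infty$ makes this tend to zero, exactly as in Section 6. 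Feeding these into the estimate from Proposition \ref{pro1},
\begin{equation*}
\|\mathcal{A}(\phi)\|_{*,\xi}+\sum_{i=1}^n|c_i|\leq C\big(\|\phi\|_{*,\xi}^2+\|\phi\|_{*,\xi}^p+\|V_\lambda w\|_{**,\xi}\big),
\end{equation*}
together with the analogous Lipschitz bound with factor $C(\|\phi_1\|_{*,\xi}^{\min\{p-1,1\}}+\|\phi_2\|_{*,\xi}^{\min\{p-1,1\}})$, the contraction mapping theorem yields the solution $(\phi,c_1,\dots,c_n)$, and the smallness of $\|V_\lambda w\|_{**,\xi}$ forces $\|\phi\|_{*,\xi}+\sum_i|c_i|\to 0$.

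For the final, quantitative assertion under the extra hypothesis $V(x)\leq C|x|^{-\mu}$ with $\mu>2s$: here I would redo the three-region estimate for $\|V_\lambda w\|_{**,\xi}$ but now exploiting the explicit power decay. On $\{|x-\xi|\leq 1\}$ one has $|x-\xi|^{\sigma+2s}V_\lambda(x)w(x)\leq C\lambda^{-2s}|x-\xi|^{\sigma+2s}\big|\tfrac{x-\xi}{\lambda}\big|^{-\mu}\leq C\lambda^{\mu-2s}|x-\xi|^{\sigma+2s-\mu}$, which is $O(\lambda^{\mu-2s})$ uniformly once $\sigma\leq\mu-2s$ (so the exponent of $|x-\xi|$ is nonpositive and $w\leq w(0)=1$ there); a similar computation on $\{|x-\xi|\geq 1\}$ with the weight $|x-\xi|^{\frac{2s}{p-1}+2s}$ and the decay $w\leq C|x-\xi|^{-2s/(p-1)}$ again produces $O(\lambda^{\mu-2s})\subset O(\lambda^{\sigma})$. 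Hence $\|V_\lambda w\|_{**,\xi}\leq C_\sigma\lambda^\sigma$, and the fixed-point estimate upgrades to $\|\phi\|_{*,\xi}\leq C_\sigma\lambda^\sigma$.

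The main obstacle is bookkeeping rather than conceptual: one must be careful that the constant $C$ in Proposition \ref{pro1} is uniform in $\lambda$ small and in $|\xi|\leq\Lambda$ (which is why the norms are recentred at $\xi$), and that the region-splitting constants in the $\|V_\lambda w\|_{**,\xi}$ estimate do not secretly depend on $\lambda$; the choice $\sigma\leq\mu-2s$ together with $\sigma<n-2s$ is exactly what is needed to keep all the resulting integrals/suprema finite and to land in the admissible range of weights from \eqref{norms}. Since all of these steps are word-for-word parallel to Lemma 5.1 and Proposition 3.1 of \cite{ddmw}, with the fractional linear theory of Proposition \ref{pro1} substituting for the local one, I would simply invoke that correspondence and omit the routine verifications, as the statement already indicates.
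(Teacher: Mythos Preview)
Your proposal is correct and mirrors the paper's own treatment: the paper omits the proof entirely, merely stating that it follows the argument of Lemma~5.1 in \cite{ddmw}, which is precisely the fixed-point scheme via $\mathcal{T}_\lambda$ that you outline (and which the paper itself spells out for the case $p>\frac{n+2s-1}{n-2s-1}$ in Section~6). Your quantitative estimate for $\|V_\lambda w\|_{**,\xi}$ under the decay $V\le C|x|^{-\mu}$ is also the right computation, and in fact supplies more detail than the paper does.
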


\bigskip

Now we are ready for the proof of our second main theorem:

\begin{proof}[Proof of Theorem \ref{thm2}.] We have found a solution $(\phi,c_1,\cdots, c_n)$ to equation \eqref{nonlinear1}. By the linear theory in Section 2, this solution satisfies
\begin{equation*}
\int_{\R^n}\Big(N(\phi)-V_\lambda\phi-V_\lambda w+\sum_{i=1}^n c_iZ_i\Big)\frac{\partial w}{\partial x_j}\,dx=0
\end{equation*}
for $j=1,\cdots, n$. So for $\lambda$ small, we need to choose the parameter $\xi$ such that $c_i=0$ for all $i$, that is
\begin{equation}\label{reduceproblem}
\int_{\R^n}\Big(N(\phi)-V_\lambda\phi-V_\lambda w\Big)\frac{\partial w}{\partial x_j}\,dx=0
\end{equation}
because the matrix with coefficients $\int_{\R^n }Z_i\frac{\partial w}{\partial x_j}\,dx$ is invertible.

{\bf Case a.} Since $V\leq |x|^{-\mu}$ for $\mu>n$, the dominant term in \eqref{reduceproblem} is
\begin{equation*}
\lambda^{-2s}\int_{\R^n}V\big(\frac{x-\xi}{\lambda}\big)w\frac{\partial w}{\partial x_j}\,dx=O(\lambda^{n-2s}).
\end{equation*}
Using the estimates for $\phi$ in the last section, one can get that
\begin{equation*}
\int_{\R^n}N(\phi)\frac{\partial w}{\partial x_j}\,dx=o(\lambda^{n-2s}).
\end{equation*}
For the term $\int V_\lambda \phi \frac{\partial w}{\partial x_j}\,dx$, one has
\begin{equation*}
\begin{split}
\int V_\lambda \phi \frac{\partial w}{\partial x_j}\,dx&=\int_{B_{R\lambda}}\cdots+\int_{B_1 \setminus B_{R\lambda}}\cdots+ \int_{\R^n\setminus B_1}\cdots \\
&\leq C\int_{B_{R\lambda}}\lambda R\cdot \lambda^{-2s}\cdot \lambda^\sigma |x|^{-\sigma }\,dx+\int_{B_1\setminus B_{\lambda R}}\lambda^\sigma |x^{-\sigma}|\cdot a(R)|x|^{-2s}\lambda |x|\,dx\\
&\quad +\int_{\R^n\setminus B_1}\lambda^\sigma a\big(\frac{1}{\lambda}\big)|x|^{-(2s+\frac{4s}{p-1}+1)}\,dx\\
&\leq o(\lambda^{n-2s})
\end{split}
\end{equation*}
as $\lambda \to 0$, where we have used the fact that $\frac{n+2s}{n-2s}<p<\frac{n+2s-1}{n-2s-1}$.

We now set
\begin{equation*}
F_\lambda^{(j)}(\xi)=\lambda^{-2s}\int_{\R^n}V\big(\frac{x}{\lambda}\big)u_\lambda\frac{\partial w}{\partial x_j}(x+\xi)\,dx+\int_{\R^n}N(\phi)\frac{\partial w}{\partial x_j}(x+\xi)\,dx.
\end{equation*}
Fix $\rho>0$ small. For $|\xi|=\rho$, and $\lambda$ small, one can check that
\begin{equation*}
\langle F_\lambda(\xi), \xi\rangle \sim \langle \nabla^2w(0) \xi,\xi\rangle<0 \mbox{ for }|\xi|=\rho,
\end{equation*}
since $0$ is local maximum point of $w$. By degree theory, we deduce that $F_\lambda$ has a zero in $B_\rho $.

\medskip

{\bf Case {b}(i). } Assume that $\displaystyle\lim_{|x|\to \infty}\Big(|x|^\mu V-f\big(\frac{x}{|x|}\big)\Big)=0$ for $N-\frac{4s}{p-1}<\mu<n, \ f\neq 0.$
Here one can check that the dominant term of \eqref{reduceproblem}  is
\begin{equation*}
\begin{split}
&\lambda^{-2s}\int_{\R^n}V(\frac{x-\xi}{\lambda})w\frac{\partial w}{\partial x_j}\,dx\\
&=\lambda^{\mu-2s}\int_{R^n}|x|^{-\mu}f\Big(\frac{x}{|x|}\Big)w(x+\xi)\frac{\partial w}{\partial x_j}(x+\xi)\,dx+o(\lambda^{\mu-2s})\\
&=O(\lambda^{\mu-2s}).
\end{split}
\end{equation*}
Again, using the estimates for $\phi$, one has
\begin{equation*}
\begin{split}
\int_{\R^n}N(\phi)\frac{\partial w}{\partial x_j}\,dx&=O(\lambda^{\mu+\sigma-2s})=o(\lambda^{\mu-2s}),\\
\int_{\R^n}V_\lambda \phi \frac{\partial w}{\partial x_j}\,dx&\leq C\lambda^{\mu+\sigma-2s}.
\end{split}
\end{equation*}
So now define
\begin{equation*}
\begin{split}
\tilde{F}_j(\xi)&:=\frac{1}{2}\int_{R^n}|x|^{-\mu}f\Big(\frac{x}{|x|}\Big)w^2(x+\xi)\,dx\\
&=\frac{1}{2}\beta^{\frac{2s}{p-1}}|\xi|^{-\frac{4s}{p-1}+n-\mu}\int_{\R^n}|y|^{-\mu}
f\Big(\frac{y}{|y|}\Big)\Big|y+\frac{\xi}{|\xi|}\Big|^{-\frac{4s}{p-1}}\,dy+o(|\xi|^{n-\mu-\frac{4s}{p-1}}).
\end{split}
\end{equation*}
Similarly,
\begin{equation*}
\begin{split}
\nabla \tilde{F}(\xi)\cdot \xi&=\frac{1}{2}\Big(n-\mu-\frac{4s}{p-1}\Big)\beta^{\frac{2s}{p-1}}|\xi|^{-\frac{4s}{p-1}+n-\mu}
\int_{\R^n}|y|^{-\mu}
f\Big(\frac{y}{|y|}\Big)\Big|y+\frac{\xi}{|\xi|}\Big|^{-\frac{4s}{p-1}}\,dy+o(|\xi|^{n-\mu-\frac{4s}{p-1}}).
\end{split}
\end{equation*}
Therefore
\begin{equation*}
\nabla \tilde{F}(\xi)\cdot \xi<0 \mbox{ \ for all } |x|=R \mbox{ \ for large }R.
\end{equation*}
Using degree theory, we get the existence of $\xi $ in $B_R$ such that $c_i=0$ for all $i$.

\medskip

{\bf Case b(ii).} $\displaystyle\lim_{|x|\to \infty}\Big(|x|^n V-f\big(\frac{x}{|x|}\big)\Big)=0,  f\neq 0.$
In this case, we have
\begin{equation*}
\begin{split}
G_j(\xi)&:=\int_{\R^n}\Big(N(\phi)-V_\lambda\phi-V_\lambda w\Big)\frac{\partial w}{\partial x_j}\,dx\\
&=\lambda^{-2s}\int_{\R^n}V(\frac{x}{\lambda})u_\lambda(x+\xi)\frac{\partial w}{\partial x_j}(x+\xi)\,dx+o(\lambda^{n-2s}).
\end{split}
\end{equation*}
We claim that $\langle G(\xi),\xi\rangle <0$ for all $|\xi|=\rho$ for $\rho>0$ small enough. Once this is true, using degree theory, we conclude that for some $|\xi|<\rho$ we have $G(\xi)=0$, which finishes the proof.

In order to prove this claim, note that for $\rho>0$ small, one has for all $|\xi|=\rho$,
\begin{equation*}
\langle \nabla w(\xi),\xi\rangle<0
\end{equation*}
Thus for $\delta>0$ small but fixed,
\begin{equation*}
\gamma:=\sup_{x\in B_\delta}\langle \nabla w(\xi),\xi\rangle <0 \mbox{ for \ all }|\xi|=\rho.
\end{equation*}
Then similarly to the proof of part (b.2) in Section 5 of \cite{ddmw}, one has
\begin{equation*}
\begin{split}
\Big|\lambda^{-2s}\int_{\R^n \setminus B_\delta}V\Big(\frac{x}{\lambda}\Big)u_\lambda(x+\xi)\langle \nabla w(x+\xi),\xi \rangle\Big|&\leq C\lambda^{n-2s},\\
\Big|\lambda^{-2s}\int_{B_{\lambda R}}V\Big(\frac{x}{\lambda}\Big)u_\lambda(x+\xi)\langle \nabla w(x+\xi),\xi \rangle\Big|&=O(\lambda^n).\\
\end{split}
\end{equation*}
Then
\begin{equation*}
\begin{split}
&\Big|\lambda^{-2s}\int_{B_\delta\setminus B_{\lambda R}}V(\frac{x}{\lambda})u_\lambda(x+\xi)\langle \nabla w(x+\xi),\xi \rangle\Big|\leq c\gamma\int_{B_\delta\setminus B_{\lambda R}}V\Big(\frac{x}{\lambda}\Big),\\
&\int_{B_\delta\setminus B_{\lambda R}}V\Big(\frac{x}{\lambda}\Big)
=\int_{B_\delta\setminus B_{\lambda R}}f\Big(\frac{x}{|x|}\Big)|x|^{-n}\,dx
+\int_{B_\delta\setminus B_{\lambda R}}|x|^{-n}\Big(V(x)|x|^n-f\Big(\frac{x}{|x|}\Big)\Big)\,dx,\\
&\int_{B_\delta\setminus B_{\lambda R}}f\Big(\frac{x}{|x|}\Big)|x|^{-n}\,dx=\log \frac{1}{\lambda}\int_{\mathbb S^{n-1}}f+O(1),\\
&\int_{B_\delta\setminus B_{\lambda R}}|x|^{-n}\Big(V(x)|x|^n-f\Big(\frac{x}{|x|}\Big)\Big)\,dx\leq \ve \log\frac{1}{\lambda},
\end{split}
\end{equation*}
for $\ve$ small enough if $R$ is large enough. Combining all the above estimates, one obtains that $\langle G(\xi),\xi\rangle <0$ for all $|\xi|=\rho$ small.\\
\end{proof}

\begin{remark}
For $p=\frac{n+2s-1}{n-2s-1}$, we can get the same solvability result as in case \emph{ii.} of Proposition \ref{invertibility} if one considers the new weighted norm
\begin{equation*}
\begin{split}
\|\phi\|_*&=\sup_{\{|x|\leq 1\}}|x|^\sigma |\phi(x)|+\sup_{\{|x|\geq 1\}}|x|^{\frac{2s}{p-1}+\alpha}|\phi(x)|,\\
\|h\|_{**}&=\sup_{\{|x|\leq 1\}}|x|^{\sigma+2s} |h(x)|+\sup_{\{|x|\geq 1\}}|x|^{\frac{2s}{p-1}+\alpha+2s}|h(x)|,
\end{split}
\end{equation*}
for some $\alpha>0$ small. In this new norm, one can also show that $w'$ is in the kernel space of $L^*$ even when $p=\frac{n+2s-1}{n-2s-1}$. With some minor modifications, we can also obtain existence of solutions.
\end{remark}

\bigskip

\noindent\textbf{Acknowledgements.}
 M.d.M. Gonz\'alez is supported by Spanish government grants MTM2014-52402-C3-1-P and MTM2017-85757-P, and the BBVA foundation grant for  Researchers and Cultural Creators, 2016. The research of H. Chan and J. Wei is supported by NSERC of Canada.

\end{document}